\documentclass[svgnames]{amsart}
\usepackage[english]{babel}
\usepackage[utf8]{inputenc}

\usepackage{svg}
\usepackage{hyperref}
\usepackage{enumitem}
\hypersetup{colorlinks,citecolor=Black,linkcolor=Black,urlcolor=Blue}
\usepackage{amssymb, amsfonts, amsmath, amsthm, amscd, amsxtra, latexsym, mathabx,mathtools}

\usepackage{tikz-cd}

%% THEOREMS
\theoremstyle{plain}
\newtheorem{thm}{Theorem}[section] 
\newtheorem{lemma}[thm]{Lemma}
\newtheorem{cor}[thm]{Corollary}
\newtheorem{prop}[thm]{Proposition}
\newtheorem{claim}[thm]{Claim}

\newtheorem{thmintro}{Theorem}

\newtheorem{corintro}[thmintro]{Corollary}

\theoremstyle{definition}
\newtheorem{defn}[thm]{Definition}
\newtheorem{rem}[thm]{Remark}
\newtheorem{notation}[thm]{Notation}

\newtheorem{question}[thmintro]{Question}
\newtheorem{ex}[thm]{Example}
\newtheorem{exintro}[thmintro]{Example}

%% MACROS

\newcommand{\aut}[1]{\operatorname{Aut}(#1)}
\newcommand{\inn}[1]{\operatorname{Inn}(#1)}

\newcommand{\Z}{\mathbb{Z}}

\newcommand{\BC}{\mathcal{BC}}

\newcommand{\centre}[2]{\operatorname{C}_{#1}(#2)}

\newcommand{\da}[1]{\operatorname{DA}_{#1}}

\newcommand{\edge}[1]{\operatorname{E}(#1)}
\renewcommand{\epsilon}{\varepsilon}

\newcommand{\Stab}[2]{\operatorname{Stab}_{#1}\left(#2\right)}
\newcommand{\dist}{\operatorname{d}}

\newcommand{\link}{\operatorname{Link}}

\renewcommand{\phi}{\varphi}

\newcommand{\ver}[1]{\operatorname{V}(#1)}

% comments

\title[JSJ splittings for all Artin groups]{JSJ splittings for all Artin groups}

\author[O. Jones]{Oli Jones}
    \address{(Oli Jones) Institute of Mathematics\\ Technische Universität Berlin\\ Berlin, Germany}
    \email{jones@math.tu-berlin.de} 

\author[G. Mangioni]{Giorgio Mangioni}
    \address{(Giorgio Mangioni) Maxwell Institute and Department of Mathematics\\ Heriot-Watt University\\ Edinburgh, UK\\ Orcid: 0000-0003-2868-5032}
    \email{gm2070@hw.ac.uk}

\author[G. Sartori]{Giovanni Sartori}
    \address{(Giovanni Sartori) Maxwell Institute and Department of Mathematics\\ Heriot-Watt University\\ Edinburgh, UK}
    \email{gs2057@hw.ac.uk}

\begin{document}

\begin{abstract}
    We prove that an Artin group splits over infinite cyclic subgroups if and only if its defining graph has a separating vertex, and explicitly construct a JSJ decomposition over infinite cyclic subgroups for all Artin groups. We then use these facts to show that, if two Artin groups are isomorphic, then they have the same set of parabolics supported on big chunks, that is, maximal subgraphs without separating vertices. We also deduce acylindrical hyperbolicity for the automorphism groups of many Artin groups, partially answering a question of Genevois in the case of Artin groups. As a consequence, we produce new families of Artin groups with the $R_\infty$ property.
\end{abstract}

\maketitle

\small

\noindent 2020 \textit{Mathematics subject classification.} 20F36, 20E08, 20F65

\noindent \textit{Key words.} Artin groups, Bass-Serre theory, JSJ decompositions, isomorphism problem, acylindrical hyperbolicity.

\normalsize

\section*{Introduction}
Artin groups are a rich class of groups generalising braid groups, with strong connections to Coxeter groups. They are defined via the following presentation: given a finite simplicial graph $\Gamma$ with vertices $\ver\Gamma$, edges $\edge\Gamma$, and for each edge $\{a,b\} \in \edge{\Gamma}$ a label $m_{ab} \in \mathbb{Z}_{\geq 2}$, the associated Artin group $A_\Gamma$ is presented by
$$\langle \ver{\Gamma} \ | \ \mathrm{prod}(a,b,m_{ab})=\mathrm{prod}(b,a,m_{ab}) \, \forall \{a,b\}\in \edge{\Gamma}\rangle,$$
where $\mathrm{prod}(u,v,n)$ denotes the prefix of length $n$ of the infinite alternating word $uvuvuv\dots$. Despite the explicit presentation, very few properties are known to hold for all Artin groups: it is not even known if they are all torsion-free, or satisfy the famous $K(\pi,1)$-conjecture \cite{Paris_Kpi1}.

In this paper we investigate when an Artin group splits over $\Z$, meaning that it admits a graph of group decomposition with infinite cyclic edge groups. Such splittings assumed a central role in the solution of the isomorphism problem for hyperbolic groups (see \cite{Sela_isoprob,Bowditch_cutpoints,RipsSela_cyclicsplit, DahmaniGuirardel} among many others); in particular, ``maximal" splittings called \emph{JSJ decompositions} were the key tool for understanding automorphisms of hyperbolic groups. In a similar fashion, we also obtain results related to the isomorphism problem for Artin groups, and establish some geometric properties of automorphism groups of Artin groups (see Theorem~\ref{thmintro:isoinvariance} and Theorem~\ref{thmintro:acylindrical}, respectively).

\subsection*{Cyclic splittings of Artin groups.}
Clay \cite{Clay} proved that a right-angled Artin group (RAAG) on a connected graph $\Gamma$ splits over $\Z$ if and only if either $\Gamma$ contains a separating vertex, or $\Gamma$ has two vertices (since $\Z^2$ splits over $\mathbb{Z}$). We generalise this to all Artin groups:
\begin{thmintro}[see Theorem~\ref{thm:no_split}]\label{thmintro:splitting}
    Let $\Gamma$ be a finite, connected, labelled simplicial graph. The Artin group $A_\Gamma$ splits over $\Z$ if and only if either $\Gamma$ has a separating vertex, or $\Gamma$ has two vertices. 
\end{thmintro}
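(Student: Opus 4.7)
The statement has two directions. For the sufficient direction, both hypotheses yield explicit visual splittings. If $\Gamma$ has a separating vertex $v$, write $\Gamma = \Gamma_1 \cup \Gamma_2$ with $\Gamma_1 \cap \Gamma_2 = \{v\}$: every defining Artin relation is internal to $\Gamma_1$ or $\Gamma_2$, so one obtains the amalgamation $A_\Gamma = A_{\Gamma_1} *_{\langle v \rangle} A_{\Gamma_2}$ over $\langle v \rangle \cong \Z$, and it is non-trivial because both factors are strictly larger than $\langle v\rangle$. If $\Gamma$ has exactly two vertices joined by an edge labelled $m$, then $A_\Gamma \cong A_m$ is a dihedral Artin group, and it splits non-trivially over its infinite cyclic center; for instance $A_3 \cong \langle x, y \mid x^2 = y^3\rangle \cong \Z *_\Z \Z$, and a similar presentation exists for every $m \geq 2$.

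For the converse, suppose $\Gamma$ is connected with $\abs{\ver\Gamma}\geq 3$ and no separating vertex, and assume towards a contradiction that $A_\Gamma$ acts minimally and without global fixed point on a tree $T$ with infinite cyclic edge stabilizers. The plan is to analyze the action of each two-generator dihedral parabolic $\langle a,b\rangle \cong A_{m_{ab}}$ via its infinite cyclic center $Z_{ab}$: either $Z_{ab}$ is elliptic and $\langle a, b\rangle$ preserves the non-empty subtree $T^{Z_{ab}}$, or $Z_{ab}$ is hyperbolic and $\langle a, b\rangle$ preserves its axis. Either way the $\langle a,b\rangle$-action factors through a very restricted family, and the standard generators $a$ and $b$ are constrained to share a common invariant subtree or line. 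One then propagates these local constraints globally: because $\Gamma\setminus\{v\}$ is connected for every $v \in \ver\Gamma$, any two edges of $\Gamma$ can be linked by a chain of edges sharing common vertices, and this should force the axes/invariant subtrees coming from different dihedral parabolics to match up. Assembling these matches eventually collapses the action of $A_\Gamma$ to one with a global fixed vertex, contradicting the assumption.

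The hard part will be this propagation. Since dihedral Artin groups themselves split over $\Z$, no single two-generator analysis yields a contradiction; the argument must genuinely use both the presence of at least three vertices and the no-separating-vertex hypothesis. The most delicate case to rule out is the ``linear'' one, in which every standard generator translates along a common line in $T$: this would force $A_\Gamma$ to surject onto a subgroup of $\operatorname{Isom}(\mathbb{R})$ with infinite cyclic kernel, making it cyclic-by-(infinite cyclic or dihedral), which is incompatible with $\Gamma$ having three pairwise related vertices of infinite order. Making both this compatibility argument and the matching-up of fixed subtrees precise, while carefully handling mixed configurations where some generators are elliptic and others hyperbolic, will be the main technical burden.
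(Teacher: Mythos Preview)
Your ``if'' direction matches the paper. For the converse you have assembled the right ingredients (dihedral centres, invariant subtrees, propagation along $\Gamma$), but the propagation step is a genuine gap, and as written it does not actually use the no-separating-vertex hypothesis. Observing that $\langle a,b\rangle$ \emph{preserves} a subtree (either $\mathrm{Fix}(Z_{ab})$ or an axis) is too weak to chain: adjacent dihedrals $\langle a,b\rangle$ and $\langle b,c\rangle$ share $b$, but $b$ preserving two subtrees says nothing about how they meet. Worse, your chaining claim (``any two edges of $\Gamma$ can be linked by edges sharing common vertices'') holds in every connected graph, including a path $a$---$b$---$c$, whose Artin group \emph{does} split over $\langle b\rangle$; so the outline as stated cannot distinguish the two situations and would ``prove'' too much.

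The paper's remedy is twofold. First, it replaces dihedral parabolics by a cyclic chain of $\Z^2$ subgroups: along an embedded cycle $v_1,\dots,v_n,v_1$ one interleaves vertices $v_j$ and dihedral centres $z_{v_jv_{j+1}}$ to obtain $g_1,g_2,\dots$ with each $\langle g_i,g_{i+1}\rangle\cong\Z^2$. For $\Z^2$ there is a sharp dichotomy (Lemma~\ref{lem:Z^2_no_fix}): either the subgroup fixes a point, or some basis element is loxodromic. In the elliptic case a leaf-of-the-span argument around the cycle forces two independent elements to fix a common edge; in the loxodromic case two independent elements fix the axis pointwise. Either way an edge stabiliser fails to be virtually cyclic (Lemma~\ref{lem:hamcycle}). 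Second, the no-separating-vertex hypothesis enters exactly once, and decisively: it guarantees that every two-edge segment of $\Gamma$ lies on some embedded cycle, so the cycle argument covers all of $\Gamma$ (Lemmas~\ref{lem:NC_from_covering} and~\ref{lem:NC_for_all}). Your ``linear'' case is thereby absorbed into Case~II of Lemma~\ref{lem:hamcycle} and needs no separate structure-of-$A_\Gamma$ analysis.
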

In other words, if an Artin group on at least three generators splits over $\Z$, then it must have a \emph{visual} splitting, that is, a splitting coming from a decomposition of the graph into two full proper subgraphs overlapping over a vertex. We also have to include the case where $\Gamma$ consists of a single edge; indeed, such an Artin group (called a dihedral Artin group) splits over $\Z$, as it is a central extension of an infinite cyclic group by a virtually free group (see Example~\ref{ex:jsjSplitDihedral}).

Our proof is similar in spirit to Clay's. However, we had to implement some upgrades, to circumvent those steps which used properties of RAAGs that are not true for all Artin groups.

\subsection*{Isomorphism invariance of big chunk parabolics.}
One of the most widely open questions for Artin groups is the isomorphism problem, which asks for an algorithm which takes two labelled graphs $\Gamma$ and $\Gamma'$ as input, and determines if they yield isomorphic Artin groups. This question, which specialises Dehn's Isomorphism Problem to the family of Artin groups, is solved when one restricts to certain subclasses, including RAAGs \cite{droms1987isomorphisms,baudisch1981raagcharacterise}, spherical-type \cite{Paris_isoprob_spherical}, and large-type \cite{vaskou_isoproblem}, but there is no unified approach. However, isomorphic Artin groups must admit the same splittings over $\Z$, as this is an algebraic property. In the same spirit, if two Artin groups are isomorphic, one can hope to use Theorem~\ref{thmintro:splitting} to recover a correspondence between ``maximal'' parabolic subgroups which do not split over $\Z$. 

To make the above more precise, we first need a definition. Given a labelled graph $\Gamma$, a \emph{big chunk} is a connected induced subgraph without separating vertices, which is maximal with these properties.\footnote{For unlabelled simplicial graphs, a big chunk is also called a \emph{biconnected component} or a \emph{block}. Though the latter terminology is more widespread, especially in computer science, we preferred the former, as it was recently used in the literature in the context of Artin groups.} In Theorem~\ref{thm:iso_invariance}, of which we give a streamlined version here, we prove that isomorphic Artin groups have isomorphic \emph{big chunk parabolics}, that is, parabolic subgroups supported on big chunks:
\begin{thmintro}[see Theorem~\ref{thm:iso_invariance}]\label{thmintro:isoinvariance}
     Let $\Gamma$ and $\Gamma'$ be finite, connected, labelled simplicial graphs, and let $\BC(\Gamma)$ and $\BC(\Gamma')$ be the collection of big chunks of $\Gamma$ and $\Gamma'$, respectively. Furthermore, let $\phi\colon A_\Gamma\to A_{\Gamma'}$ be an isomorphism. Then there exists a bijection $\phi_\#\colon \BC(\Gamma)\to \BC(\Gamma')$, such that for every $\Lambda\in \BC(\Gamma)$  the following hold:
     \begin{enumerate}
        \item $A_{\Lambda}\cong A_{\phi_\#(\Lambda)}$.
         \item $A_{\phi_\#(\Lambda)}$ is a conjugate of $\phi(A_{\Lambda})$, possibly unless $\Lambda$ is a leaf with label $2$.
          \item  If $\Lambda$ is an even leaf, then so is $\phi_\#(\Lambda)$, with the same label.
     \end{enumerate}
\end{thmintro}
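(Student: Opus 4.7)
The approach is to leverage the explicit JSJ decomposition for $A_\Gamma$ that will be constructed earlier in the paper, together with Theorem~A. Iteratively splitting $A_\Gamma$ visually along its separating vertices yields a graph of groups decomposition whose non-cyclic vertex groups are exactly the $A_\Lambda$ for $\Lambda\in\BC(\Gamma)$, with additional $\Z$-factors for separating vertices and cyclic edge groups. By Theorem~A, for every $\Lambda\in\BC(\Gamma)$ with $|\ver\Lambda|\geq 3$ the vertex group $A_\Lambda$ does not split over $\Z$, so it appears as a \emph{rigid} vertex in any cyclic JSJ of $A_\Gamma$. The remaining big chunks are dihedral Artin groups on single edges, which correspond to the flexible pieces of the JSJ.

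Next, I would invoke JSJ uniqueness up to deformation: any isomorphism $\phi\colon A_\Gamma\to A_{\Gamma'}$ sends the JSJ of its source to one equivalent to the JSJ of its target, inducing a bijection between conjugacy classes of rigid vertex groups. This immediately yields the bijection $\phi_\#$ on big chunks with at least three vertices, together with properties (1) and (2) for them: conjugate subgroups are isomorphic, and $\phi(A_\Lambda)$ is a conjugate of $A_{\phi_\#(\Lambda)}$ by construction.

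For the dihedral big chunks (single-edge blocks), I would argue separately. Such an $A_\Lambda$ with label $m\geq 3$ has infinite cyclic centre, and the quotient by the centre is a virtually free group whose isomorphism type detects both the value and the parity of $m$, via invariants such as torsion orders of finite subgroups and the structure of maximal $\Z$-subgroups. These invariants are preserved by $\phi$, so leaves correspond to leaves and even leaves to even leaves of the same label, establishing (3) and (1) for them. Reattaching each dihedral leaf to the rigid part through its cut-vertex edge group then produces the conjugator required by (2). The exception for leaves labelled $2$ is that $A_\Lambda\cong\Z^2$ is abelian and admits a one-parameter family of cyclic splittings, so no canonical conjugator can be extracted from the JSJ data.

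The main obstacle will be the dihedral analysis: one must carefully match the visual picture (each $2$-vertex leaf block as a flexible piece attached at a single cut vertex) with the abstract JSJ produced in the paper, and then extract algebraic invariants of the flexible vertex groups that determine both the label and its parity. The $\Z^2$ case is genuinely exceptional, since its abelianness allows the defining cyclic subgroup to be moved around within deformation space, so one cannot hope to recover a canonical conjugator there.
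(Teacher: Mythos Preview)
Your overall strategy---use the JSJ decomposition and match vertex groups under $\phi$---is the same as the paper's, but there is a genuine gap in your treatment of single-edge big chunks. The visual splitting along separating vertices, whose vertex groups are the $A_\Lambda$ for $\Lambda\in\BC(\Gamma)$, is \emph{not} a JSJ tree: whenever $\Lambda$ is an even leaf, the dihedral $A_\Lambda$ splits further over its centre, and this splitting is universally elliptic. The paper's actual JSJ $J(\Gamma)$ refines each even leaf, so that the associated black vertex group is $\langle a,z_{ab}\rangle\cong\Z^2$ (braided case) or $\langle a\rangle\cong\Z$ (toral case), \emph{not} $A_\Lambda$. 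Consequently, matching JSJ vertex groups under $\phi$ does not directly match the big chunk parabolics for even leaves, and a further argument is required.

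Your proposed invariant---torsion orders in the quotient of a dihedral by its centre---presupposes that $\phi$ already carries the dihedral parabolic $A_\Lambda$ to a conjugate of another dihedral parabolic, which is precisely what is not yet known. The paper instead extracts the label from the $\Z^2$ vertex group alone: a primitive element of $\langle a,z_{ab}\rangle$ admits an $n$-th root in $A_\Gamma$ only for $n\le m$ when the leaf has label $2m$ (this uses the retraction onto the leaf and a Bass--Serre argument inside the dihedral). This root-counting both forces matching labels and distinguishes the $\Z^2$ coming from a braided even leaf from the $\Z^2$ coming from a non-leaf edge of label~$2$, where primitive elements have no proper roots. Only after this does one promote the conjugacy of $\Z^2$ vertex groups to a conjugacy of the full dihedrals $A_\Lambda$, via a separate tree argument. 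Finally, toral leaves are counted by the first Betti number of the JSJ graph of groups, which is a deformation-space invariant; this yields the bijection but no conjugator, explaining the exception in~(2).
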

\noindent Theorem~\ref{thmintro:isoinvariance} is already new for two-dimensional Artin groups, and we believe that our findings, together with \cite[Theorem E]{vaskou_isoproblem}, could be a key building block for a future solution of the isomorphism problem in this class.

\subsection*{JSJ decompositions for Artin groups} 
The key tool in the proof of Theorem~\ref{thmintro:isoinvariance} is the existence of a \emph{JSJ decomposition} over cyclic subgroups, in the sense of \cite{GuirardelLevitt}. We refer to Definition \ref{def:jsj_tree} for the details; for our purposes, it is enough to recall that, given a graph of group decomposition $\mathcal G$ of a group $G$, with cyclic edge groups, if $\mathcal G$-vertex groups are elliptic in every $G$-action on a tree with cyclic edge stabilisers, then $\mathcal G$ is a JSJ decomposition. In other words, a JSJ decomposition should be thought of as a ``maximal'' splitting of $G$, which means that vertex groups cannot be further split ``in a canonical way''. Our methods allow us to build a JSJ decomposition over cyclic subgroups of an Artin group $A_\Gamma$, whose vertex groups roughly correspond to the big chunk parabolics of $\Gamma$:

\begin{thmintro}[see Theorem~\ref{thm:JSJ}]\label{thmintro:JSJ}
    Let $\Gamma$ be a finite, connected, labelled graph on at least three vertices. There is an $A_\Gamma$-tree $J(\Gamma)$ which is a JSJ-decomposition for $A_\Gamma$ over virtually cyclic subgroups.
\end{thmintro}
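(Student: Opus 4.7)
My plan is to construct $J(\Gamma)$ as the Bass-Serre tree of a two-stage refinement of the visual splitting of $A_\Gamma$ along separating vertices of $\Gamma$. In the first stage, I form the bipartite block-cut tree of $\Gamma$: one side of the bipartition consists of the separating vertices of $\Gamma$, the other of its big chunks, and there is an edge from $v$ to $\Lambda$ whenever $v\in\Lambda$. Promoting this to a graph of groups by decorating each big-chunk vertex with $A_\Lambda$, and each separating-vertex vertex (and each edge) with $\langle v\rangle\cong\Z$, and applying Theorem~\ref{thmintro:splitting} inductively, one obtains an honest splitting of $A_\Gamma$ over $\Z$.

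In the second stage, I refine the dihedral big chunks, which still admit $\Z$-splittings. For $\Lambda=\{a,b\}$ with label $m$, the Artin group $A_\Lambda$ is a central extension $1\to Z\to A_\Lambda\to Q\to 1$ with $Q$ virtually free (Example~\ref{ex:jsjSplitDihedral}); pulling back a JSJ of $Q$ over finite subgroups yields a refinement of $A_\Lambda$ over virtually cyclic subgroups in which the separating-vertex edge groups $\langle a\rangle$ and $\langle b\rangle$ remain elliptic, since their images in $Q$ have finite order. Grafting these refinements into the block-cut tree at the corresponding vertices gives a graph of groups $\mathcal G$; I take $J(\Gamma)$ to be its Bass-Serre tree.

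To check that $J(\Gamma)$ is a JSJ in the sense of \cite{GuirardelLevitt}, two conditions must be verified. For \emph{domination}, I need every vertex stabilizer of $J(\Gamma)$ to be elliptic in every $A_\Gamma$-tree with virtually cyclic edge groups: a non-dihedral vertex group $A_\Lambda$ corresponds to a big chunk on at least three vertices with no separating vertex, so by Theorem~\ref{thmintro:splitting} it does not split over $\Z$ and is elliptic; vertex groups inside dihedral refinements are either rigid or quadratically hanging by the classical JSJ theory of the virtually free quotient $Q$. For \emph{universal ellipticity}, each edge stabilizer of $J(\Gamma)$ must fix a point in every virtually cyclic $A_\Gamma$-tree.

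The main obstacle is the universal ellipticity step. For a separating vertex $v$, the strategy is: assume $\langle v\rangle$ acts loxodromically on some virtually cyclic $A_\Gamma$-tree $T$ and combine the visual splitting $A_\Gamma=A_{\Gamma_1}*_{\langle v\rangle}A_{\Gamma_2}$ with the action on $T$ to extract a nontrivial splitting of some $A_{\Gamma_i}$ over a virtually cyclic subgroup; iterate inductively until one reaches a big chunk on at least three vertices, contradicting Theorem~\ref{thmintro:splitting}. The edge groups arising from the dihedral refinements, namely the centers $Z$, are handled analogously: $Z$ is the entire center of $A_\Lambda$, so any action of $A_\Lambda$ on a tree with virtually cyclic edge groups must keep $Z$ elliptic.
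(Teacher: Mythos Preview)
Your refinement step contains a genuine error. You assert that the standard generators $a,b$ of a dihedral big chunk $A_\Lambda=\da{m}$ have finite-order images in the central quotient $Q$, but for $m\ge 3$ this is false: for odd $m$ one has $Q\cong\Z/2*\Z/m$ generated by the images of $x=\mathrm{prod}(a,b,m)$ and $y=ab$, and $a=y^{-(m-1)/2}x$ projects to an infinite-order element; the even case is analogous. Consequently $a$ is loxodromic in the pulled-back JSJ of the dihedral, the incident block--cut edge group $\langle a\rangle$ is not elliptic there, and the grafting is ill-defined. Worse, since any two JSJ trees of $\da{m}$ share the same elliptic subgroups, there is \emph{no} JSJ refinement of an odd-label dihedral in which $a$ stays elliptic. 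The paper's remedy is not to refine such chunks at all: it proves directly (via a ping-pong argument, Claim~\ref{claim:dihedral_nofix}) that whenever both generators act elliptically in a $VC$-tree the whole dihedral does, so non-leaf dihedral big chunks and odd leaves are already universally elliptic as they stand. Only even leaves are refined, and then by a hand-picked splitting $\langle a,z_{ab}\rangle*_{\langle z_{ab}\rangle}\langle ab\rangle$ (respectively an HNN with the tip as stable letter, for label $2$) that keeps the non-tip generator elliptic.

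Your universal-ellipticity argument for a separating vertex $v$ is also incomplete: the descent ``iterate until one reaches a big chunk on at least three vertices'' never reaches a contradiction when $\Gamma$ is a tree, since then every big chunk is an edge. The paper argues locally instead. If $v$ is the middle of a two-edge segment $\{a,v,c\}$ and were loxodromic on some $VC$-tree, then the elements $g\in\{a,z_{av}\}$ and $h\in\{c,z_{vc}\}$ commute with $v$ and hence preserve its axis; suitable products $g^{k_1}v^{k_2}$ and $h^{l_1}v^{l_2}$ then fix the axis pointwise and generate a non-virtually-cyclic edge stabiliser, a contradiction. The same mechanism shows each $z_{ab}$ is elliptic, and together with the claim above this yields that every \emph{vertex} group of $J(\Gamma)$ is elliptic in every $VC$-tree, giving universal ellipticity and domination simultaneously.
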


The construction of $J(\Gamma)$ is explicit, but we postpone it to Definition~\ref{defn:J_gamma} and only provide an example here (see Figure~\ref{fig:JSJ_from_intro}). Crucially, the proof of Theorem~\ref{thm:JSJ} uses that big chunk parabolics on at least three vertices do not split further over $\Z$, as a consequence of Theorem~\ref{thmintro:splitting}.

\begin{figure}[htp]
\centering
\includegraphics[width=\textwidth, alt={An example of the JSJ decomposition for an Artin group. There is one white vertex for every separating vertex, and one black vertex for every big chunk. Even leaves further split as HNN extensions (if the label is two), or amalgamated products otherwise.}]{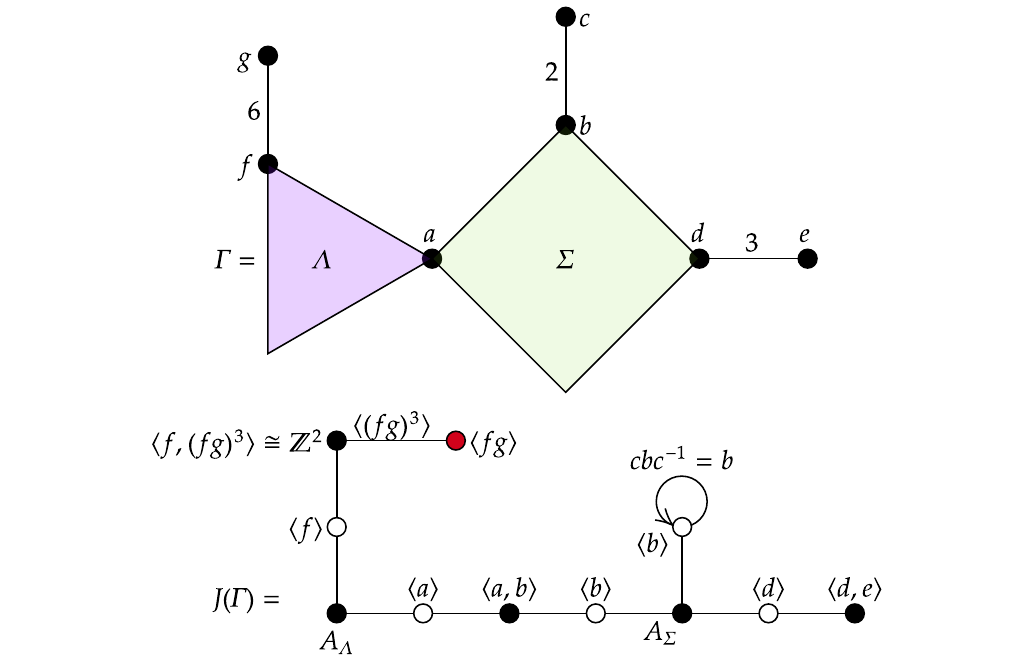}
\caption{The graph $\Gamma$ and JSJ decomposition $J(\Gamma)$ from Example \ref{exintro:jsj}.}
\label{fig:JSJ_from_intro}
\end{figure}

\begin{exintro}\label{exintro:jsj}
    Consider a connected graph $\Gamma$ as in Figure \ref{fig:JSJ_from_intro}, whose big chunks are $\Sigma$, $\Lambda$, and the three leaves, with labels $2$, $3$, and $6$, respectively. The associated JSJ decomposition $J(\Gamma)$ of $A_\Gamma$ is also depicted in the Figure. White vertices correspond to separating vertices of $\Gamma$, while black vertices roughly correspond to big chunks. The only exception are even leaves: a leaf with label $2$ (here, spanned by $b$ and $v$) further splits as a HNN extension, while an even leaf with higher label (here, generated by $f$ and $g$) splits as an amalgamation $\Z^2*_\Z \Z$ over its centre (here, generated by $(fg)^3$, as the label is $6$). Whenever an edge of $J(\Gamma)$ is not labelled in this picture, the edge group is the white vertex group, whose embedding in the black vertex group corresponds to the graph inclusion of a separating vertex inside a big chunk.
\end{exintro}

\subsection*{Acylindrical hyperbolicity of automorphism groups and consequences}
Our JSJ decomposition is not unique, as different Artin generating sets of the same Artin group may yield non-isomorphic JSJ decompositions. By work of Guirardel and Levitt \cite{guirardel2011cylinder}, there is a procedure that takes as input the JSJ tree and produces a canonical, $\aut{A_\Gamma}$-invariant tree, called the \emph{tree of cylinders}. Under suitable conditions on $\Gamma$, we can use the tree of cylinders to prove that $\aut{A_\Gamma}$ is acylindrically hyperbolic:

\begin{thmintro}[see Corollary \ref{cor:acylindrical}]\label{thmintro:acylindrical}
    Let $A_\Gamma$ be a torsion-free Artin group, such that $\Gamma$ is connected and has a separating vertex which does not centralise the whole group. Then $\aut{A_\Gamma}$ is acylindrically hyperbolic.
\end{thmintro}

We remark that the assumption that $A_\Gamma$ is torsion-free is conjecturally trivial, since all Artin groups are expected to be torsion-free. The above Corollary affirmatively answers \cite[Question 1.1]{genevois2019negative} for any Artin group whose defining graph contains a separating vertex.

\medskip
An Artin group as in Theorem~\ref{thmintro:acylindrical} is centerless, as it is acylindrically hyperbolic and torsion-free (see \cite[Corollary 7.2]{Osin_AH}). A centerless group with acylindrically hyperbolic automorphism group has several remarkable features, including the so-called $R_\infty$ property:

\begin{corintro}[{of \cite[Corollary 8.1.4]{R_infty}}]\label{corintro:Rinfty}
    An Artin group $A_\Gamma$ as in Theorem~\ref{thmintro:acylindrical} has property $R_\infty$. 
\end{corintro}

Recall that, given a group $G$ and an automorphism $\phi\in \aut{G}$, two elements $g,h\in G$ are in the same \emph{$\phi$-twisted conjugacy class} if there exists $k\in G$ such that $\phi(k)gk^{-1}=h$. Furthermore, $G$ has property $R_\infty$ if every automorphism $\phi\in\aut{G}$ admits infinitely many twisted conjugacy classes. The $R_\infty$ property finds its origin in Nielsen fixed point theory, where under certain assumptions it relates to the number of fixed points of homeomorphisms of a space with fundamental group $G$. Among Artin groups, the $R_\infty$ property had been previously established for some spherical-type and affine-type Artin groups \cite{rinf_braid, rinf_purebraid,rinf_affine}, for several classes of large-type Artin groups \cite{Juahs_rinf,soroko2024propertyrinftynewclasses}, and for RAAGs \cite{rinf_raag1,rinf_raag2}.

We also list some other consequences of Theorem~\ref{thmintro:acylindrical}, which exhibit the existence of several non-trivial $\aut{A_\Gamma}$-invariant objects:

\begin{corintro}\label{corintro:cor_of_AH(Aut)}
    An Artin group $A_\Gamma$ as in Theorem~\ref{thmintro:acylindrical} admits:
    \begin{itemize}
        \item an infinite simple characteristic quotient \cite[Corollary 1.4]{charquot};
        \item an infinite-dimensional space of $\aut{A_\Gamma}$-invariant quasimorphism \cite[Theorem E]{FFF-Wade_Aut_invariant_quasimorph};
        \item unbounded $\aut{A_\Gamma}$-invariant word norms \cite[Corollary C]{FFF-Wade_Aut_invariant_quasimorph}.
    \end{itemize}
\end{corintro}

\subsection*{The JSJ deformation space}
The cost of the tree of cylinders is that the edge groups are no longer cyclic. If one wishes to consider cyclic splittings, the canonical object is the  \emph{JSJ deformation space}. Deformation spaces were first introduced by Forester~\cite{forester}, motivated by outer space for free groups~\cite{culler1986moduli}. Any JSJ decomposition of a group $G$ gives rise to the JSJ space~\cite{GuirardelLevitt}, a contractible space with an action of the (outer) automorphism group of $G$. As such, the JSJ deformation space turns out to be a powerful tool to study (outer) automorphism groups; indeed, in \cite{JonManSar_StrTwiCon}, we combine the results of this paper with the theory of deformation spaces to tackle a version of the isomorphism problem for new classes of Artin groups. More precisely, we are able to reduce the so-called Strong Twist Conjecture for an Artin group to its big chunk parabolics.

The first author recently defined another deformation space for certain large-type Artin groups~\cite{Jones_VF}. We note that, in general, their space does not coincide with the JSJ space of this article.

\subsection*{Future directions.}
In \cite{GrovesHull}, Groves and Hull generalised Clay's results and proved that a RAAG on a connected graph $\Gamma$ splits over an Abelian subgroup if and only if $\Gamma$ contains a separating simplex, or is itself complete, to account for $\Z^n$. It is possible that similar techniques may lead to an analogous characterisation of when an Artin group splits over a \emph{spherical} Artin subgroup. Notably, the analogue of Theorem~\ref{thmintro:isoinvariance} would yield the isomorphism invariance of \emph{chunk} parabolics, where a chunk is a maximal subgraph without separating cliques of spherical type. This would be relevant since the first author proved that, if the chunks of a large-type Artin group $A_\Gamma$ are isomorphism invariants and have finite outer automorphism group, then $\text{Out}(A_\Gamma)$ is of type VF, hence finitely presented \cite[Theorem 1.1]{Jones_VF}.

The first step in the above direction would be to understand the following:
\begin{question}\label{question:dihedral_split}
    Let $\Gamma$ be a finite, connected, labelled graph, and let $\da{n}$ be a dihedral Artin group with label $n$. What are necessary and sufficient conditions for $A_\Gamma$ to split over $\da{n}$?
\end{question}
The matter is complicated by the existence of ``exotic'' maximal dihedral subgroups which are not parabolic (see e.g. \cite[Theorem D]{vaskou_isoproblem}). Some of these questions will also be considered in independent work of Sliazkaite.

\subsection*{Organisation of the paper}
In Section~\ref{sec:background} we recall some properties of simplicial actions on trees. Section~\ref{sec:cyclicsplit} is devoted to the characterisation of when an Artin group splits over $\Z$, Theorem~\ref{thmintro:splitting}.

In Section~\ref{sec:JSJ} we produce a JSJ decomposition, thus proving Theorem~\ref{thmintro:JSJ} (see Definition~\ref{defn:J_gamma} for the construction of the splitting, and Theorem~\ref{thm:JSJ} for the proof that it yields a JSJ decomposition). Then in Section~\ref{sec:AH} we combine the JSJ with the machinery of \cite{guirardel2011cylinder} to prove Theorem~\ref{thmintro:acylindrical} on acylindrical hyperbolicity of automorphism groups of certain Artin groups (see Corollary~\ref{cor:acylindrical}). 

Finally, the JSJ plays a central role in Section~\ref{sec:isoinvariance}, where we prove that big chunk parabolics are isomorphism invariant, Theorem~\ref{thmintro:isoinvariance} (see Theorem~\ref{thm:iso_invariance}).

\subsection*{Acknowledgements}
We thank Yassine Guerch and Gilbert Levitt for suggesting the application to acylindrical hyperbolicity of automorphism groups, and Francesco Fournier-Facio for pointing us towards its corollaries. We are also grateful to Kaitlin Ragosta for enlightenments on spherical Artin groups, to Bianca Marchionna for pointing us towards the connections to graph theory, and to Ruta Sliazkaite for insightful discussions. A special thanks goes to our PhD supervisors Laura Ciobanu, Alessandro Sisto, and Alexandre Martin for comments on a first draft of this document.

\section{Background}\label{sec:background}
\noindent We first recall some properties of simplicial actions on trees, referring to \cite{trees} for further generalities. We shall work in the following setting:
\begin{notation}\label{notation:action_on_trees}
    By \emph{tree} we mean a simply connected simplicial graph, equipped with the metric where each edge has length one. Given a group $G$, a \emph{$G$-tree} $(T,\Omega)$ is a tree $T$ endowed with an action $\Omega\colon G\to \aut{T}$ by simplicial isometries, without edge inversions, and minimal (i.e. no proper sub-tree is invariant under the action). We often suppress the reference to the action $\Omega$ when it is not relevant or it is clear from the context.
\end{notation}

\noindent Throughout we will write $\Stab{\Omega}{S}$ for the pointwise stabiliser of $S \subseteq T$, or $\Stab{G}{S}$ if there is no danger of confusing the action.

The \emph{translation length} of an element $g\in G$ is defined as $|g|\coloneq\inf_{x\in T}\dist_T(x,gx)$. The \emph{minset} of $g$ is the subtree spanned by all points $x$ which realise the translation length.  If $|g|=0$ the element is called \emph{elliptic}, and its minset is the sub-tree of all fixed points of $g$. If otherwise $|g|>0$ the element is \emph{loxodromic}, and its minset is a geodesic line on which $g$ acts by translations.

The following lemmas are straightforward:

\begin{lemma}[{\cite[Lemma 6.11]{CV}}]\label{lem:commuting_elm_and_minset}
Suppose a group $G$ acts on a tree $T$, and let $g$ and $h$ be commuting elements. Then the minset of $g$ is invariant under $h$.
\end{lemma}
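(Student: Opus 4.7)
The plan is to observe that the statement follows from a single computation exploiting the fact that $h$ acts on $T$ as a simplicial isometry. Define the displacement function $\delta_g\colon T\to \mathbb{R}_{\geq 0}$ by $\delta_g(x)=\dist_T(x,gx)$, so that by definition the minset of $g$ is the preimage $\delta_g^{-1}(|g|)$. The key remark is that for every $x\in T$,
\[
\delta_g(hx)=\dist_T(hx,g(hx))=\dist_T(hx,h(gx))=\dist_T(x,gx)=\delta_g(x),
\]
where the second equality uses $gh=hg$ and the third equality uses that $h$ acts by isometries. Hence $\delta_g\circ h=\delta_g$, and in particular $h$ sends the level set $\delta_g^{-1}(|g|)$ to itself, which is exactly the claim.

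The only small step I would then justify carefully is that the set obtained this way really is what we called the minset in both the elliptic and loxodromic regimes: if $|g|=0$, the identity $\delta_g^{-1}(0)=\mathrm{Fix}(g)$ is immediate, and we can also argue directly that $g(hx)=h(gx)=hx$ whenever $gx=x$; if $|g|>0$, the minset is the axis of $g$ and the above computation shows $h$ preserves it, which additionally confirms that $h$ permutes the points realising the infimum. I do not anticipate any genuine obstacle here: the whole content of the lemma is the interchange of $g$ and $h$ in a single distance computation, and the lemma is essentially a restatement of the functoriality of $\delta_g$ under the centraliser of $g$.
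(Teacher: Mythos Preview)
Your argument is correct and is the standard proof: the identity $\delta_g\circ h=\delta_g$ follows immediately from $gh=hg$ and the fact that $h$ is an isometry, and it forces $h$ to preserve the sublevel set where the infimum is attained. The paper does not actually supply a proof of this lemma; it is stated with a citation to \cite[Lemma~6.11]{CV} and used as a black box, so there is nothing to compare against beyond noting that your proof is exactly the classical one.
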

\begin{lemma}[{\cite[Corollary 2.5]{Clay}}]\label{lem:Z^2_no_fix}
    If $\Z^2$ acts on a tree without a global fixed point, then for any basis $\{g, h\}$, one of the elements must act loxodromically.
\end{lemma}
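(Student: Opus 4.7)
The plan is to argue by contradiction: assume both $g$ and $h$ are elliptic, and derive a $\Z^2$-fixed point. Let $F_g = \Stab{G}{g}$ and $F_h = \Stab{G}{h}$ denote the fix-sets in $T$, which are non-empty subtrees since $g,h$ are elliptic. Because $g$ and $h$ commute, Lemma~\ref{lem:commuting_elm_and_minset} (applied to the minsets, which coincide with the fix-sets in the elliptic case) tells us that $F_g$ is $h$-invariant and $F_h$ is $g$-invariant.

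First I would dispose of the easy case $F_g \cap F_h \neq \emptyset$: any point in the intersection is fixed by both generators of $\Z^2 = \langle g, h\rangle$, hence by the whole group, contradicting the hypothesis that there is no global fixed point.

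The main step is the disjoint case $F_g \cap F_h = \emptyset$. Since $T$ is a tree and these are non-empty disjoint subtrees, there is a unique \emph{bridge} between them, i.e. a geodesic $[x,y]$ with $x\in F_g$, $y \in F_h$, realising $\dist_T(F_g,F_h)$. In a tree, the nearest-point projection onto a subtree is single-valued, so $x$ is the unique point of $F_g$ closest to $y$. Now apply $h$: since $h(F_g)=F_g$ and $h(y)=y$, the point $h(x)$ lies in $F_g$ and is still at minimal distance from $y$. Uniqueness forces $h(x)=x$, so $x \in F_g \cap F_h$, contradicting disjointness.

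The only place one has to be a little careful is the argument that the nearest point in a subtree is unique and that this bridge is preserved by $h$; both are standard facts about tree geometry (any two non-empty disjoint subtrees of a tree admit a unique shortest connecting geodesic, whose endpoints are the nearest-point projections). I expect this to be the only technical point, and it is elementary. Concluding, at least one of $g, h$ must be loxodromic.
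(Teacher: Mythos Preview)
The paper does not prove this lemma; it simply cites it as \cite[Corollary 2.5]{Clay}. So there is no ``paper's own proof'' to compare against. That said, your argument is the standard one and is correct: if two commuting elliptic isometries of a tree had disjoint fixed-point sets, the bridge endpoint on $F_g$ would be forced to lie in $F_h$ by the uniqueness of nearest-point projection, a contradiction. One notational slip: you write $F_g = \Stab{G}{g}$, but $\Stab{G}{g}$ denotes a stabiliser in $G$, not the fixed-point set of $g$ in $T$; you mean $F_g = \mathrm{Fix}_T(g)$ (the minset of the elliptic element $g$). With that corrected the proof is fine.
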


\noindent Recall also that a group $G$ \emph{splits} over a family $\mathcal{Z}$ of subgroups if $G$ is isomorphic to the fundamental group of a \emph{graph of groups} whose edge groups are conjugates of subgroups in $\mathcal{Z}$. If this is the case, then $G$ acts on the \emph{Bass-Serre tree} of the splitting, and edge stabilisers are conjugates of subgroups in $\mathcal{Z}$. If moreover $G$ does not coincide with any vertex group (that is, if the splitting is non-trivial), then the action has no global fixed point.

Conversely, if a group $G$ acts without global fixed points on a tree $T$, then $G$ is isomorphic to the fundamental group of a (non-trivial) graph of groups, whose edge groups are conjugates of edge stabilisers. With a little abuse of notation, we often conflate a $G$-tree with the corresponding graph of groups decomposition.

\section{Cyclic splittings of Artin groups}\label{sec:cyclicsplit}
\noindent Given a finite, labelled simplicial graph $\Gamma$, let $A_\Gamma$ be the associated Artin group. We fix once and for all an identification between $\Gamma$ and a generating set of $A_\Gamma$, and we will always conflate a vertex of $\Gamma$ with the corresponding group element. \par 
We recall here the notion of a parabolic subgroup. A subgraph $\Lambda\le\Gamma$ is said to be \emph{full}, or \emph{the full subgraph of~$\Gamma$ induced by $\ver\Lambda$}, if every edge of~$\Gamma$ connecting two vertices of~$\Lambda$ is also an edge of~$\Lambda$. For every full subgraph $\Lambda\le\Gamma$ we can consider the subgroup~$\langle{\ver\Lambda}\rangle_{A_\Gamma}$ of $A_\Gamma$ generated by $\ver\Lambda$. Such a subgroup is called the \emph{standard parabolic subgroup} of $A_\Gamma$ generated by $\ver\Lambda$ and is denoted $A_{\ver\Lambda}$. Then $A_{\ver\Lambda}$ is isomorphic to~$A_{\Lambda}$, by a result of Van der Lek~\cite[Theorem 4.13]{VanDerLek}, so henceforth we shall denote the standard parabolic subgroup on $\Lambda$ by $A_\Lambda$. A \emph{parabolic subgroup} of $A_\Gamma$ is a conjugate of a standard parabolic subgroup.

Recall that, if $\{a,b\}$ is an edge labelled with some integer $m\ge 3$, the \emph{dihedral Artin group} $\da{m}\coloneq\langle a,b\mid \mathrm{prod}(a,b,m)=\mathrm{prod}(b,a,m)\rangle$ has infinite cyclic centre, generated by 
\[z_{ab}=\begin{cases}
\Delta_{ab}     &\mbox{ if $m$ is even};\\
\Delta_{ab}^2   &\mbox{ if $m$ is odd},
\end{cases}\]
where $\Delta_{ab}=\mathrm{prod}(a,b,m)$ is the \emph{Garside element} of the dihedral (see e.g. \cite{BrieskornSaito}).

We now move to the characterisation of when an Artin group splits over $\Z$. Throughout the paper we actually work in the general setting of splittings over virtually cyclic subgroups. In fact every splitting we encounter is over $\Z$, which is unsurprising since Artin groups are conjecturally torsion-free while a virtually cyclic group which is not $\Z$ must have finite-order elements (see \cite[Lemma 3.2]{Macpherson}).

Recall that a vertex $v$ of a simplicial graph $\Gamma$ is \emph{separating} if the subgraph spanned by $\ver{\Gamma}-\{v\}$ is disconnected.

\begin{thm}\label{thm:no_split}
    Let $\Gamma$ be a finite, connected, labelled simplicial graph. The Artin group $A_\Gamma$ splits over virtually cyclic groups if and only if
    \begin{itemize}
        \item $\Gamma$ has two vertices, or
        \item $\Gamma$ has a separating vertex.
    \end{itemize}
    In both cases, $A_\Gamma$ splits over $\Z$.
\end{thm}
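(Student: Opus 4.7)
The plan has two parts. For the ``if'' direction, I would argue directly. When $|\ver\Gamma|=2$, connectedness forces $\Gamma$ to be a single labelled edge, so $A_\Gamma$ is either $\Z^2$ (label $2$) or a dihedral Artin group $\da{m}$ with $m\ge 3$. The group $\Z^2$ is an HNN extension of $\langle a\rangle\cong\Z$ with stable letter $b$ acting trivially, while $\da{m}$ splits over $\Z$ as a central extension of the virtually free group $\da{m}/\langle z_{ab}\rangle$, cf.\ Example~\ref{ex:jsjSplitDihedral}. If instead $\Gamma$ has a separating vertex $v$, I would decompose $\Gamma=\Gamma_1\cup\Gamma_2$ with $\Gamma_1\cap\Gamma_2=\{v\}$ and invoke Van der Lek's theorem to obtain the amalgamation $A_\Gamma\cong A_{\Gamma_1}\ast_{\langle v\rangle}A_{\Gamma_2}$ over the infinite cyclic group $\langle v\rangle$.

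For the ``only if'' direction, assume $\Gamma$ has at least three vertices and no separating vertex, and argue by contradiction: suppose $A_\Gamma$ acts minimally on a tree $T$ without a global fixed point, with virtually cyclic edge stabilisers. The strategy is first to show that every edge subgroup $A_{\{a,b\}}$ is elliptic in $T$, and then to upgrade ellipticity on each edge to a global fixed point using connectedness of $\Gamma$.

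The analysis of an edge $\{a,b\}$ splits by label. If $m_{ab}\ge 3$, the subgroup $\da{m_{ab}}=A_{\{a,b\}}$ has infinite cyclic centre $\langle z_{ab}\rangle$, and by Lemma~\ref{lem:commuting_elm_and_minset} all of $\da{m_{ab}}$ preserves the minset of $z_{ab}$. Were $z_{ab}$ loxodromic, the action of $\da{m_{ab}}$ on its axis would give a homomorphism into $\mathrm{Isom}(\R)$ with virtually abelian image, so the kernel would have to contain the commutator subgroup of a finite-index subgroup of $\da{m_{ab}}$; but $\da{m_{ab}}/\langle z_{ab}\rangle$ is virtually free of exponential growth, so this kernel cannot be virtually cyclic, contradicting the hypothesis on edge stabilisers. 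Hence $z_{ab}$ is elliptic, and the induced action of $\da{m_{ab}}/\langle z_{ab}\rangle$ on $\mathrm{Fix}(z_{ab})$ has finite edge stabilisers; combining this with structural input from the ambient graph should pin down an $\da{m_{ab}}$-fixed point in $T$. If instead $m_{ab}=2$, then $A_{\{a,b\}}=\Z^2$, and Lemma~\ref{lem:Z^2_no_fix} forces one of $a$, $b$ to be loxodromic as soon as $\Z^2$ fails to be elliptic, with the whole $\Z^2$ preserving the corresponding axis. Here I would exploit $2$-vertex-connectedness of $\Gamma$ to detect a third vertex $c$ joined to the edge $\{a,b\}$ by paths avoiding each of $a$ and $b$ in turn, and argue that the Artin relations between $c$ and $\{a,b\}$ are incompatible with the putative axis.

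Once every edge subgroup is elliptic, I would assemble a global fixed point by a gluing argument: commuting elliptic elements have intersecting fixed sets via the standard nearest-point projection onto invariant subtrees, and for dihedrally related generators one intersects fixed sets inside the $\da{m_{ab}}$-invariant subtree $\mathrm{Fix}(z_{ab})$. Connectedness of $\Gamma$ then propagates a common fixed point across all vertex generators, contradicting the assumption. The hardest step I foresee is the $m_{ab}=2$ case: since $\Z^2$ itself admits splittings over $\Z$, ellipticity of a commuting pair cannot be forced by the local relation alone, and must be extracted from the global $2$-connectedness hypothesis. This is precisely where the argument must go beyond Clay's original RAAG proof, which had access to cleaner commutation and centraliser information via star-shaped subgraphs.
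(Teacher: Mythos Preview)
Your ``if'' direction is fine and matches the paper. The ``only if'' direction has two genuine gaps.

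\medskip

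\textbf{The gluing step is broken.} Suppose you have shown that every edge subgroup $A_{\{a,b\}}$ is elliptic; being non-virtually-cyclic, each then fixes a unique vertex. If two adjacent dihedrals $A_{\{a,b\}}$ and $A_{\{b,c\}}$ fix distinct points, all you learn is that the shared generator $b$ fixes the geodesic between them. But $\langle b\rangle$ is cyclic, so this is perfectly compatible with virtually cyclic edge stabilisers and yields no contradiction. ``Connectedness propagates a common fixed point'' is simply false in a tree: you need consecutive elliptic subgroups whose \emph{intersection} is already non-virtually-cyclic. The paper achieves this by working one level up, with two-edge segments rather than single edges: adjacent triples $\{v_{i-1},v_i,v_{i+1}\}$ and $\{v_i,v_{i+1},v_{i+2}\}$ overlap in the dihedral $\langle v_i,v_{i+1}\rangle$, so if they fix different points that dihedral fixes an edge.

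\medskip

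\textbf{Ellipticity of dihedrals is not actually argued, and the diagnosis of the hard case is inverted.} For $m_{ab}\ge 3$ you correctly force $z_{ab}$ to be elliptic, but then $\da{m_{ab}}/\langle z_{ab}\rangle$ is virtually free and can certainly act on $\mathrm{Fix}(z_{ab})$ without a fixed point and with finite edge stabilisers; ``structural input from the ambient graph'' is carrying the entire weight and you never say what it is. For $m_{ab}=2$ you explicitly leave the gap open. In fact both cases require the same global input, and the paper's key device is to \emph{reduce} the non-commuting case to the commuting one: replace the pair $a,b$ by the chain $a,\,z_{ab},\,b$, so that consecutive elements always generate a copy of $\Z^2$. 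Running this along an embedded cycle of $\Gamma$ produces a cyclic sequence $g_1,\ldots,g_r$ with each $\langle g_i,g_{i+1}\rangle\cong\Z^2$; a direct case analysis (either every $\Z^2$ fixes a point, or one does not) then locates a non-virtually-cyclic edge stabiliser. Two-connectedness of $\Gamma$ enters only through the elementary fact that every two-edge segment lies on an embedded cycle, after which a short covering lemma assembles the cycle subgroups into a global fixed point.
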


\noindent The ``if'' part of Theorem~\ref{thm:no_split} is clear. Indeed, if $\Gamma$ has at most two vertices, the corresponding Artin group is isomorphic to either $\Z^2$ or a dihedral Artin group, all of which split over $\Z$ (see  Example~\ref{ex:jsjSplitDihedral} for explicit splittings of dihedral Artin groups over $\Z$). Moreover, if $\Gamma$ has a separating vertex $v$, then there exist two proper induced subgraphs $\Gamma_1$ and $\Gamma_2$ of $\Gamma$ such that $\Gamma_1\cap\Gamma_2=\{v\}$, while $\Gamma_1\cup\Gamma_2=\Gamma$; therefore $A_\Gamma$ admits a \emph{visual splitting} $A_{\Gamma_1} *_{\langle v\rangle }A_{\Gamma_2}$.

We shall devote the rest of the Section to the proof of the ``only if'' part, that is, if $\Gamma$ has at least three vertices and no separating vertices, then $A_\Gamma$ cannot split over virtually cyclic groups. It is useful to keep in mind that, by Bass-Serre theory, admitting no splitting over virtually cyclic groups is equivalent to the fact that, whenever the group acts on a tree without global fixed points, there exists an edge whose stabiliser is not virtually cyclic.

For the next lemma, recall that a \emph{Hamiltonian cycle} in a graph is an embedded cycle that visits each vertex exactly once.

\begin{lemma}\label{lem:hamcycle}
    Let $\Gamma$ be a finite simplicial graph admitting a Hamiltonian cycle. Then $A_\Gamma$ does not split over virtually cyclic groups.
\end{lemma}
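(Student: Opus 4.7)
\medskip\noindent\textbf{Proof plan.} I would argue by contradiction: suppose $A_\Gamma$ acts minimally on a tree $T$ without a global fixed point, with every edge stabiliser virtually cyclic. Let $v_1,v_2,\ldots,v_n$ (indices mod $n$) be the vertices along the Hamiltonian cycle, let $m_i$ be the label of the edge $\{v_i,v_{i+1}\}$, and let $D_i=\langle v_i,v_{i+1}\rangle$ be the resulting dihedral Artin subgroup, whose centre is generated by $z_i$. The plan is to show that every $z_i$ is elliptic, assemble a common fixed point for all of them using Helly for subtrees, and then promote it to a global fixed point of $A_\Gamma$, contradicting the minimality assumption.

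The first substantial step is showing that each $z_i$ is elliptic on $T$. Suppose $z_i$ were loxodromic with axis $L$. By Lemma~\ref{lem:commuting_elm_and_minset}, both $v_i$ and $v_{i+1}$ preserve $L$, so the whole of $D_i$ does. Since the action has no inversions, $D_i$ acts on $L$ by translations, with cyclic image. The kernel fixes $L$ pointwise, hence lies inside an edge stabiliser along $L$ and is virtually cyclic. For $m_i\ge 3$ this makes $D_i$ virtually solvable, contradicting the presence of non-abelian free subgroups in such a dihedral Artin group. For $m_i=2$ we have $D_i\cong \Z^2$, and Lemma~\ref{lem:Z^2_no_fix} together with virtual cyclicity of edge stabilisers along $L$ produces the contradiction directly.

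Having shown each $z_i$ elliptic, let $F_i=\mathrm{Fix}(z_i)$; this is a non-empty subtree which, by Lemma~\ref{lem:commuting_elm_and_minset}, is preserved by both $v_i$ and $v_{i+1}$. I would next verify that any two of the $F_i$ meet (using that $v_{i+1}\in D_i\cap D_{i+1}$ and the relations in each dihedral to force an intersection near a fixed point of a shared generator), then invoke Helly's theorem for subtrees to obtain $x_0\in\bigcap_i F_i$. Finally, one argues that each $v_i$ fixes $x_0$: $v_i$ acts on the subtree $F_{i-1}\cap F_i$, on which the commuting centres $z_{i-1}$ and $z_i$ act trivially; a loxodromic action of $v_i$ on this subtree would inflate edge stabilisers along its axis with two independent commuting elements, contradicting virtual cyclicity. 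Since $\{v_1,\ldots,v_n\}$ generates $A_\Gamma$, a common fixed point for all $v_i$ contradicts the no-global-fixed-point assumption.

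The step I expect to be the main obstacle is the promotion from a shared fixed point of the central elements $z_i$ to a shared fixed point of the generators $v_i$: commutation only forces $v_i$ to preserve $F_i$, not to fix it, so one must carefully exploit the simultaneous presence of $z_{i-1}$ and $z_i$ in the centraliser of $v_i$, and the virtual cyclicity of edge stabilisers, to rule out a loxodromic restriction of $v_i$ to $F_{i-1}\cap F_i$. Establishing the pairwise intersections $F_i\cap F_j\neq\emptyset$ for non-consecutive indices is also delicate and may need to be bootstrapped from consecutive intersections using the cyclic structure of the Hamiltonian path.
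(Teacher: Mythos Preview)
The promotion step you flag is not merely delicate; it does not close. Even granting that every $z_i$ is elliptic with fixed tree $F_i$, that the $F_i$ pairwise meet, and that $v_i$ acts elliptically on $F_{i-1}\cap F_i$, you have only shown that $v_i$ fixes \emph{some} point of $F_{i-1}\cap F_i$, not the particular point $x_0\in\bigcap_j F_j$. Since $v_i$ does not commute with $z_j$ for $j\notin\{i-1,i\}$, it need not preserve $\bigcap_j F_j$, and nothing pins $\operatorname{Fix}(v_i)$ to $x_0$. There are cracks upstream as well: when $m_i=2$ the centre of $D_i$ is all of $\Z^2$, so your $z_i$ is not well-defined; and even the consecutive intersections $F_i\cap F_{i+1}$ are unclear, since $z_i$ and $z_{i+1}$ both commute with $v_{i+1}$ but not with each other.

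The paper pursues a different contradiction: rather than manufacturing a global fixed point, it exhibits an edge whose stabiliser is not virtually cyclic. It interleaves vertices with dihedral centres to form a cyclic sequence $g_1,g_2,\ldots$ in which every consecutive pair generates a copy of $\Z^2$ (this absorbs the $m_i=2$ case cleanly). If some $\langle g_i,g_{i+1}\rangle$ has no fixed point, a basis element is loxodromic and two elements without a common power are found fixing its axis pointwise. Otherwise each such $\Z^2$ fixes a unique point $p_i$; these cannot all coincide, and at a leaf of the subtree they span two elements $g_j,g_{k+1}$ without a common power both fix the outgoing edge. Either way one lands directly on a non-virtually-cyclic edge stabiliser, never needing all the generators to share a fixed point.
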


\begin{proof}
    Let $T$ be a tree on which $A_\Gamma$ acts without global fixed points, and we want to find an edge whose stabiliser is not virtually cyclic. Let $C$ be the Hamiltonian cycle, and let $\{v_1,\ldots, v_n,v_{n+1}=v_1\}$ be a cyclic ordering of its vertices. Whenever $m_j\coloneq m_{v_jv_{j+1}}\ge 3$ let $z_j\coloneq z_{v_jv_{j+1}}$. Let $g_1=v_1$, and inductively define
    $$g_{i+1}=\begin{cases}
        v_{j+1}&\mbox{ if }g_i=v_j\mbox{ and }m_{j}=2;\\
        z_{j}&\mbox{ if }g_i=v_j\mbox{ and }m_{j}\ge 3;\\
        v_{j+1}&\mbox{ if }g_i=z_j.
    \end{cases}$$
    In other words, two consecutive $g_i$ and $g_{i+1}$ are either the vertices of an edge of $C$ with label $2$, or a vertex of an edge of $C$ with label at least $3$ and the generator of the corresponding centre. See Figure~\ref{fig:cycle_of_gens_and_centers} for an example.

\begin{figure}[htp]
        \centering
        \includegraphics[width=\textwidth, alt={Example of how to inductively construct $g_i$ so that every two consecutive $g$ generate a free abelian group of rank $2$.}]{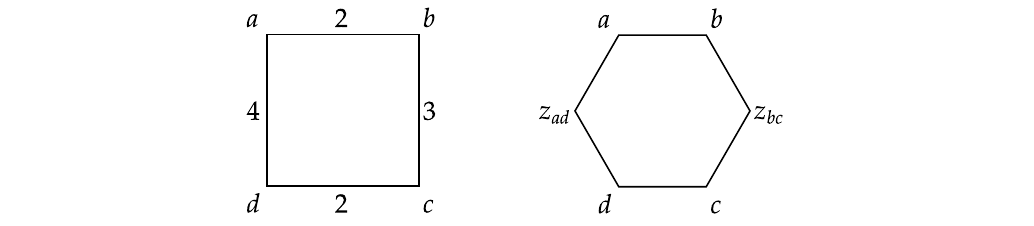}
        \caption{Here $\Gamma$ is the square on the left. Then we set $g_1=a$, $g_2=b$, $g_3=z_{bc}$, and so on. This way, every two consecutive $g_i$ generate a copy of $\Z^2$.}
        \label{fig:cycle_of_gens_and_centers}
    \end{figure}

    Now define $G_i\coloneq \langle g_i,g_{i+1}\rangle \cong \Z^2$. By construction $G_{i-1}\cap G_{i}=\langle g_i\rangle$ (this is a consequence of \cite{VanDerLek} if the subgroups are contained inside different dihedrals). There are two cases to analyse.
\par\medskip
\noindent \textbf{Case I}. Assume first that each $G_i$ fixes a point. If some $G_i$ fixes an edge we are done; thus assume that each $G_i$ fixes a unique point $p_i$. If all the $p_i$ were the same, then $A_\Gamma$, which is generated by the $G_i$s, would have a global fixed point. So let $p$ be a leaf of the subtree $S$ spanned by the $p_i$, and let $I\subseteq\{1,\ldots, r\}$ be such that $G_i$ fixes $p$ if and only if $i\in I$. Notice that $I$ is non-empty, and does not contain all indexes as there is no global fixed point; hence there exist $j,k\in I$ such that $j-1,k+1\not\in P$, where addition is modulo $r$. Let $p_{j-1}$ (resp. $p_{k+1}$) be the point fixed by $G_{j-1}$ (resp. $G_{k+1}$). This way, $g_j\in G_{j-1}\cap G_j$ fixes the non-trivial segment $[p_{j-1}, p]$, and similarly $g_{k+1}\in G_{k}\cap G_{k+1}$ fixes $[p_{k+1}, p]$. As $p$ was a leaf of $S$, the two segments share an edge, so $ \langle g_j, g_{k+1}\rangle$ fixes an edge. Now, $g_i$ and $g_{k+1}$ are infinite order elements without a non-trivial common power, as they are either two different vertices, or a vertex and the generator of a centre, or the generators of the centres of two different dihedrals (in all three cases, the absence of a common power follows immediately from \cite{VanDerLek}). In particular $\langle g_j, g_{k+1}\rangle$ is not virtually cyclic and fixes an edge, proving that $A_\Gamma$ does not split over virtually cyclic groups.
\par\medskip
\noindent \textbf{Case II}. Assume now that some $G_i$ has no fixed points, say $G_1$ without loss of generality. By Lemma~\ref{lem:Z^2_no_fix}, we can assume without loss of generality that $g_2$ acts loxodromically, so it acts by translations on some geodesic line $A$. By Lemma~\ref{lem:commuting_elm_and_minset}, $g_1(A)=A$, so there exist integers $k_1,k_2$, with $k_1\neq 0$, such that $h=g_1^{k_1}g_2^{k_2}$ fixes $A$ pointwise. For the same reason, there exist integers $l_2,l_3$, with $l_3\neq 0$, such that $h'=g_2^{l_2}g_3^{l_3}$ fixes $A$ pointwise. Notice that, since $h\in G_1$, $h'\in G_2$, and $G_1\cap G_2=\langle g_2\rangle$, we get that $h$ and $h'$ are infinite order elements without a non-trivial common power. Hence, as in Case I, the fact that $\langle h, h'\rangle$ fixes an edge concludes the proof.
\end{proof}

\begin{lemma}\label{lem:NC_from_covering}
    Let $\Gamma$ be a finite, connected, labelled simplicial graph on at least three vertices, and let $\mathcal{L}$ be a collection of induced subgraphs of $\Gamma$ such that:
    \begin{enumerate}
        \item\label{item:nc_sub} For every $\Lambda\in \mathcal{L}$, $A_\Lambda$ does not split over virtually cyclic groups;
        \item\label{item:2edge_seg} Every two-edges segment of $\Gamma$ belongs to some $\Lambda\in \mathcal{L}$.
    \end{enumerate}
    Then $A_\Gamma$ does not split over virtually cyclic groups. 
\end{lemma}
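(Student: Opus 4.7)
The plan is to suppose for contradiction that $A_\Gamma$ acts on a tree $T$ with virtually cyclic edge stabilisers and no global fixed point, and derive a global fixed point nonetheless. The first observation is that each $A_\Lambda$, being finitely generated and not splitting over virtually cyclic subgroups (hypothesis (1)), must have a non-empty fixed subtree $S_\Lambda \subseteq T$: otherwise a minimal $A_\Lambda$-invariant subtree of $T$ would produce a non-trivial graph-of-groups decomposition with virtually cyclic edge groups. Since $\Gamma$ is connected with $|\ver\Gamma| \ge 3$, every vertex of $\Gamma$ lies in some two-edge segment (if it has degree $1$, its unique neighbour must have another neighbour by connectedness), which by (2) sits inside some $\Lambda \in \mathcal{L}$. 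Therefore every generator $v$ has non-empty fixed subtree $F_v \coloneq \mathrm{Fix}(v) \subseteq T$, and every edge $\{u,v\} \in \edge\Gamma$ (which extends to a two-edge segment, hence lies in some $\Lambda$) satisfies $F_u \cap F_v \supseteq S_\Lambda \neq \emptyset$.

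The main step is to upgrade this to $F_u \cap F_v \neq \emptyset$ for \emph{all} pairs of vertices, after which Helly's property for subtrees of a tree produces a point in $\bigcap_{v \in \ver\Gamma} F_v$, which is then fixed by $A_\Gamma = \langle \ver\Gamma \rangle$, contradicting the standing assumption. I would argue by contradiction: pick $u, v \in \ver\Gamma$ with $F_u \cap F_v = \emptyset$ minimising $d \coloneq \dist_\Gamma(u,v) \ge 2$. If $d = 2$, the two-edge segment $u\text{-}v_1\text{-}v$ lies in some $\Lambda$, so $\emptyset \neq S_\Lambda \subseteq F_u \cap F_{v_1} \cap F_v \subseteq F_u \cap F_v = \emptyset$, absurd. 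If $d \ge 3$, fix a minimal $\Gamma$-path $u = v_0, \dots, v_d = v$ and let $\gamma \subseteq T$ be the non-trivial bridge between the disjoint subtrees $F_{v_0}$ and $F_{v_d}$. Each intermediate $v_i$ ($1 \le i \le d-1$) has $\Gamma$-distance strictly less than $d$ from both endpoints, so by the minimality of $d$ the subtree $F_{v_i}$ meets both $F_{v_0}$ and $F_{v_d}$; in a tree, a subtree meeting two disjoint subtrees must contain the bridge between them, so $\gamma \subseteq F_{v_i}$. In particular $\langle v_1, v_2 \rangle$ fixes some edge of $\gamma$ pointwise. But $\{v_1, v_2\} \in \edge\Gamma$, so $\langle v_1, v_2 \rangle$ is either $\Z^2$ or a dihedral Artin group and in either case contains $\Z^2$; this contradicts the virtual cyclicity of the edge stabilisers of $T$.

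The hardest part is the $d \ge 3$ case: the essential input is the Artin-group-specific fact that two $\Gamma$-adjacent vertices generate a subgroup containing a copy of $\Z^2$, which supplies the non-virtually-cyclic obstruction once the intermediate vertices have been forced to fix the bridge. Hypothesis (2) enters in its strongest form in the $d = 2$ base case, where it directly produces the desired contradiction without any bridging argument.
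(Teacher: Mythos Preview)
Your proof is correct and follows essentially the same strategy as the paper: reduce to the case where each $A_\Lambda$ (hence each two-edge segment) fixes a point, locate two generators with disjoint fixed subtrees, and then exhibit a dihedral parabolic $\langle v_i,v_{i+1}\rangle$ fixing an edge of $T$. Your minimality-plus-bridge argument is a mild tactical variant of the paper's, which instead first observes that each two-edge segment fixes a \emph{unique} point and then walks along the path until two overlapping segments have different fixed points; both routes land on the same dihedral-fixes-an-edge contradiction.
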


\begin{proof}
    Suppose that $A_\Gamma$ acts on a tree $T$, without a global fixed point. If some $A_\Lambda$ has no fixed point, then by \eqref{item:nc_sub} a non-virtually-cyclic subgroup of $A_\Lambda$ (and therefore of $A_\Gamma$) fixes an edge, and we are done. Thus assume that each $A_\Lambda$ fixes a point. In particular, every vertex of $\Gamma$ fixes a point (this is because $\Gamma$ is connected and has at least three vertices, so every vertex belongs to a two-edges segment). Furthermore, every two-edges segment with vertices $\{a,b,c\}$ fixes a vertex, which we can assume to be unique as otherwise the non-virtually-cyclic subgroup $\langle a,b\rangle$ would fix an edge. 

    Since there is no global fixed point, there exist vertices $v,v'$ of $\Gamma$ which do not share a fixed point. As argued above, $v$ and $v'$ cannot belong to the same two-edges segment, so they are at distance $n\ge 3$ in $\Gamma$. Furthermore, if we take a combinatorial path $v=v_0, \ldots, v_n=v'$ from $v$ to $v'$, there must be some index $0<i<n-1$ such that $\langle v_{i-1}, v_i, v_{i+1}\rangle$ and $\langle v_i, v_{i+1}, v_{i+2}\rangle$ fix different points. In particular, the dihedral $\langle  v_i, v_{i+1}\rangle$ fixes an edge, so $A_\Gamma$ does not split over virtually cyclic groups. 
\end{proof}

\noindent The following concludes the proof of Theorem~\ref{thm:no_split}:
\begin{lemma}\label{lem:NC_for_all}
    Let $\Gamma$ be a finite, connected, labelled simplicial graph on at least three vertices, without separating vertices. Then $A_\Gamma$ does not split over virtually cyclic groups.
\end{lemma}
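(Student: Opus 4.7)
The plan is to apply Lemma~\ref{lem:NC_from_covering} by constructing, for each two-edges segment of $\Gamma$, a covering subgraph that admits a Hamiltonian cycle, so that Lemma~\ref{lem:hamcycle} supplies the non-splitting of the subgraph's Artin group.

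Concretely, fix a two-edges segment $a - b - c$ in $\Gamma$. Since $\Gamma$ has no separating vertex, $\Gamma \setminus \{b\}$ is connected, so I can choose a simple path $P = v_0 v_1 \cdots v_k$ in $\Gamma \setminus \{b\}$ with $v_0 = a$ and $v_k = c$. Let $\Lambda_{abc}$ be the full (induced) subgraph of $\Gamma$ on $\{b\} \cup \ver{P}$. The edges of $P$, together with $\{b,a\}$ and $\{b,c\}$, form a cycle in $\Gamma$ that visits each vertex of $\Lambda_{abc}$ exactly once, so this is a Hamiltonian cycle of $\Lambda_{abc}$ (adding the remaining edges of the induced subgraph does not destroy this cycle). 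By Lemma~\ref{lem:hamcycle}, $A_{\Lambda_{abc}}$ does not split over virtually cyclic groups.

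Taking $\mathcal{L} = \{\Lambda_{abc} : a-b-c \text{ is a two-edges segment in } \Gamma\}$, condition~\eqref{item:nc_sub} of Lemma~\ref{lem:NC_from_covering} holds by the previous paragraph, and condition~\eqref{item:2edge_seg} holds by construction (the segment $a - b - c$ is contained in $\Lambda_{abc}$). Lemma~\ref{lem:NC_from_covering} then yields that $A_\Gamma$ does not split over virtually cyclic groups.

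The only subtle point is checking that the induced subgraph $\Lambda_{abc}$ really admits a Hamiltonian cycle: a priori the induced subgraph may contain chords of $P$ or diagonals through $b$, but none of this prevents the explicit cycle $b - v_0 - v_1 - \cdots - v_k - b$ from being Hamiltonian, since Hamiltonicity only requires the \emph{existence} of one such cycle using edges actually present in the graph.
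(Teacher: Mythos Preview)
Your proof is correct and follows essentially the same approach as the paper: the paper takes $\mathcal{L}$ to be the collection of induced subgraphs spanned by embedded cycles, observes that every two-edges segment lies in some embedded cycle because $\Gamma$ has no separating vertex, and then applies Lemmas~\ref{lem:hamcycle} and~\ref{lem:NC_from_covering}. Your argument is the same, except that you make the construction of the cycle containing a given segment $a-b-c$ explicit (via a simple path from $a$ to $c$ in $\Gamma\setminus\{b\}$), which is exactly the justification the paper leaves implicit.
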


\begin{proof}
    Let $\mathcal{L}$ be the collection of induced subgraphs of $\Gamma$ spanned by embedded cycles. For every $\Lambda\in \mathcal{L}$,  $A_\Lambda$ does not split over virtually cyclic groups by Lemma~\ref{lem:hamcycle}; moreover, since $\Gamma$ has at least three vertices and no separating vertex, every two-edges segment belongs to an embedded cycle. Hence $\mathcal{L}$ satisfies the requirements of Lemma~\ref{lem:NC_from_covering}, and the conclusion follows.
\end{proof}

\noindent We end the Section with two remarks, respectively completing the picture of which Artin groups split over $\Z$, and reminding the reader of when an Artin group splits over finite subgroups.

\begin{rem}[Disconnected graphs]
    If $\Gamma$ is disconnected then $A_\Gamma$ splits over $\Z$. Indeed if $\Gamma$ has at least three vertices then it admits a visual splitting over a vertex; if instead $\Gamma$ consists of two vertices then $A_\Gamma\cong F_2$, which splits over $\Z$ as $\langle a,b,t\mid tat^{-1}=b\rangle\cong\langle a,t\rangle$.
\end{rem}

\begin{rem}[One-endedness]\label{rem:one_end}
Recall that, by Stallings' Theorem~\cite{stallings}, a group has more than one end if and only if it does not split over finite subgroups. For an Artin group $A_\Gamma$, the latter happens if and only if $\Gamma$ is either a point or disconnected. Indeed, if $\Gamma$ is disconnected then $A_\Gamma$ is the free product of the parabolic subgroups on its connected components, while if $\Gamma$ is a point then $\Z$ is a HNN extension of the trivial group. Conversely, \cite[Proposition 1.3]{BDM} states that an Artin group on a connected graph on at least two vertices is not relatively hyperbolic, and in particular it cannot split over finite subgroups.
\end{rem}

\section{JSJ decomposition over virtually cyclic subgroups}\label{sec:JSJ}
\noindent This Section is devoted to the explicit construction of a JSJ decomposition over virtually cyclic subgroups for a one-ended Artin group. Recall that we work in the setting of Notation~\ref{notation:action_on_trees}, so all group actions on trees are assumed to be minimal and without inversions.
\begin{defn}[$VC$-splitting]
    Let $G$ be a group. A \emph{$VC$-splitting} of $G$ is a $G$-tree $(T,\Omega)$ such that, for every edge $e$ of $T$, $\Stab{\Omega}{e}$ is virtually cyclic.
\end{defn}

\begin{defn}[{JSJ tree, \cite{GuirardelLevitt}}]\label{def:jsj_tree}
     A $VC$-splitting $(T,\Omega)$ of a group $G$ is a \emph{JSJ tree over virtually cyclic subgroups} if:
     \begin{itemize}
         \item $T$ is \emph{universally elliptic}, meaning that, for every edge $e$ of $T$ and every $VC$-splitting $(T',\Omega')$, $\Stab{\Omega}{e}$ acts elliptically on $T'$;
         \item $T$ \emph{dominates} every universally elliptic tree, meaning that, if $T'$ is any universally elliptic $VC$-splitting, there is a $G$-equivariant map $T\to T'$; in other words, vertex stabilisers for $\Omega$ act elliptically on $T'$.
     \end{itemize}
     The associated graph of groups decomposition of $G$ is called a \emph{JSJ decomposition over virtually cyclic subgroups}.
\end{defn}

\begin{ex}[JSJ splittings of dihedral Artin groups]
    \label{ex:jsjSplitDihedral}
    We first exhibit a JSJ decomposition over virtually cyclic groups for a dihedral Artin group $\da{n}$. Let $(T,\Omega)$ be a $VC$-tree for $\da{n}$. Notice that edge stabilisers in $\Omega$ cannot be trivial, as dihedrals are one-ended by Remark~\ref{rem:one_end}; moreover, they must be infinite cyclic, since dihedrals are torsion-free (this follows from e.g. \cite[Theorem D]{CMV:parabolics}). 

    First, let $n$ be odd. Consider the splitting $(T_n,\Omega_n)$ as an amalgamated product induced by the isomorphism $\da{n}\cong\langle x,y\mid x^2=y^n\rangle$, where $y=ab$ and $x=ab\ldots a$. Let $z=x^2$, and suppose by contradiction that $z$ acts loxodromically on $T$. Since $z$ generates the centre, every element in $\da{n}$ commutes with $z$, so it must fix the axis of $z$ by Lemma~\ref{lem:commuting_elm_and_minset}. In particular, by minimality of the action, the whole $T$ coincides with the axis of $z$. Let $g$ generate the stabiliser of an edge of $T$, and notice that, since the action is without inversions, $g$ fixes the whole line pointwise. Then $\da{n}$ would be isomorphic to a semidirect product $\langle g\rangle\rtimes \langle z\rangle\cong \Z^2$, and this is not the case. We thus proved that $z$ must act elliptically on $T$, and in turn so do $x$ and $y$ as they are roots of an elliptic element. This shows that $(T_n,\Omega_n)$ is a JSJ tree over cyclic groups, as its vertex stabilisers are generated by conjugates of $x$ and $y$.

    Now let $n=2m$ for $m\ge2$, and let $(T_n,\Omega_n)$ be the splitting as an HNN extension induced by the isomorphism  $\da{n}\cong\langle x,y\mid xy^mx^{-1}=y^m\rangle$, where $y=ab$ and $x=a$. Let $z=y^m$, which generates the centre. By the above argument $z$ must act elliptically on $T$, and in turn so does its root $y$, which generates the vertex group. Then again $(T_n,\Omega_n)$ is a JSJ tree over cyclic groups.

    Finally, there is no JSJ decomposition over cyclic subgroups of $\Z^2$. Indeed, given any primitive element $a\in \Z^2$, complete it to a basis $\{a,b\}$ and consider the splitting as an HNN extension $\Z^2\cong \langle a,b\mid bab^{-1}=a\rangle$ where the vertex group is $\langle a\rangle$. This means that, if there were a JSJ decomposition over cyclic subgroups, then its edge stabilisers would be contained in every cyclic subgroup, so they would be trivial; this would contradict that $\Z^2$ is one-ended.
\end{ex}

\begin{notation}\label{notation:3vert}
    For the rest of this Section let $\Gamma$ be a finite, connected, labelled simplicial graph on at least three vertices, so that $A_\Gamma$ does not split over finite subgroups by Remark~\ref{rem:one_end}.
\end{notation}

\begin{defn}
    A \emph{big chunk} is a connected induced subgraph of $\Gamma$ without separating vertices, which is maximal (with respect to inclusion) with these properties. A \emph{big big chunk} is a big chunk on at least three vertices. A \emph{(big) big chunk parabolic} is a subgroup of $A_\Gamma$ conjugated to some $A_\Lambda$, where $\Lambda$ is a (big) big chunk.
\end{defn}
\noindent Notice that two big chunks can only overlap over a vertex, which must separate $\Gamma$; conversely, every separating vertex belongs to at least two big chunks. 

\begin{rem}[Big chunk parabolics are retracts]\label{rem:retraction}
Let $\Lambda$ be a big chunk of $\Gamma$. For every vertex $v\in \Gamma$ define $\rho(v)\in\Lambda$ to be the closest vertex in $\Gamma$ to $v$; such vertex is unique, because $\Lambda$ is a big chunk. Then mapping every generator $v$ to $\rho(v)$ gives a homomorphism $\rho\colon A_\Gamma\to A_\Lambda$ which is the identity on $A_\Lambda$. 
\end{rem}

\begin{defn}[$J(\Gamma)$]\label{defn:J_gamma}
Let $\Gamma$ be a finite, labelled, simplicial graph, and let $\mathcal{B}(\Gamma)$ be the bipartite graph defined as follows:\footnote{In graph theory, $\mathcal{B}(\Gamma)$ is known as the \emph{block-cut tree} of $\Gamma$.}
\begin{itemize}
    \item $\mathcal{B}(\Gamma)$ has one black vertex for every big chunk, and one white vertex for every separating vertex of $\Gamma$. We denote black vertices by the corresponding big chunks, and white vertices by the corresponding vertices of $\Gamma$.
    \item If $v$ is a white vertex and $\Lambda$ is a black vertex, there is an edge $\{v,\Lambda\}$ if and only if $v\in \Lambda$.
\end{itemize}

\noindent Next, recall that a \emph{leaf} of $\Gamma$ is an edge one of whose endpoints, called the \emph{tip} of the leaf, has valence one. A leaf is \emph{even} or \emph{odd} according to the parity of its label; an even leaf is \emph{toral} if the label is $2$, and is \emph{braided} otherwise. Now let $\mathcal{B}'(\Gamma)$ be the graph obtained from $\mathcal{B}(\Gamma)$ as follows:
\begin{itemize}
    \item glue a one-edge loop to each black vertex corresponding to a toral leaf;
    \item for every braided even leaf $\Lambda$, add a red vertex which is only adjacent to $\Lambda$.
\end{itemize}

\noindent We are finally ready to describe the candidate JSJ decomposition $J(\Gamma)$. The base graph of $J(\Gamma)$ is $\mathcal{B'}(\Gamma)$. Vertex and edge groups are as follows:
\begin{itemize}
    \item If $v$ is a white vertex, the associated vertex group is $\langle v\rangle$.
    \item If $\Lambda$ is a black vertex which is not an even leaf, the corresponding vertex group is $A_\Lambda$.
    \item Let $\langle a,b\rangle$ be a black vertex corresponding to a toral leaf with tip $b$. The associated vertex group is $\langle a\rangle$, and the stable letter of the loop is $b$, which commutes with $a$.
    \item Let $\langle a,b\rangle$ be a black vertex corresponding to an even leaf with label $2m\ge 4$ and tip $b$, and let $z=z_{ab}$. The black vertex group is $\langle a,z\rangle\cong \Z^2$; the red vertex group is generated by $r=ab$; and the edge between them identifies $r^m$ with $z$. To understand the construction, one should recall that
    $$\langle a,b\,|\,(ab)^m=(ba)^m\rangle\cong \langle a,z,r\,|\,[a,z]=1,\,r^m=z\rangle\cong \Z^2 *_{\Z} \Z.$$
    \item Whenever $\{v,\Lambda\}$ is an edge of $\mathcal{B}(\Gamma)$, the associated edge group is $\langle v\rangle$, and inclusions are given by subgraph containments.
\end{itemize}
\noindent The whole construction is clarified with an example in Figure~\ref{fig:expansion}.
\end{defn}

\begin{figure}[htp]
    \centering
    \includegraphics[width=\textwidth, alt={The various steps to construct the JSJ decomposition.}]{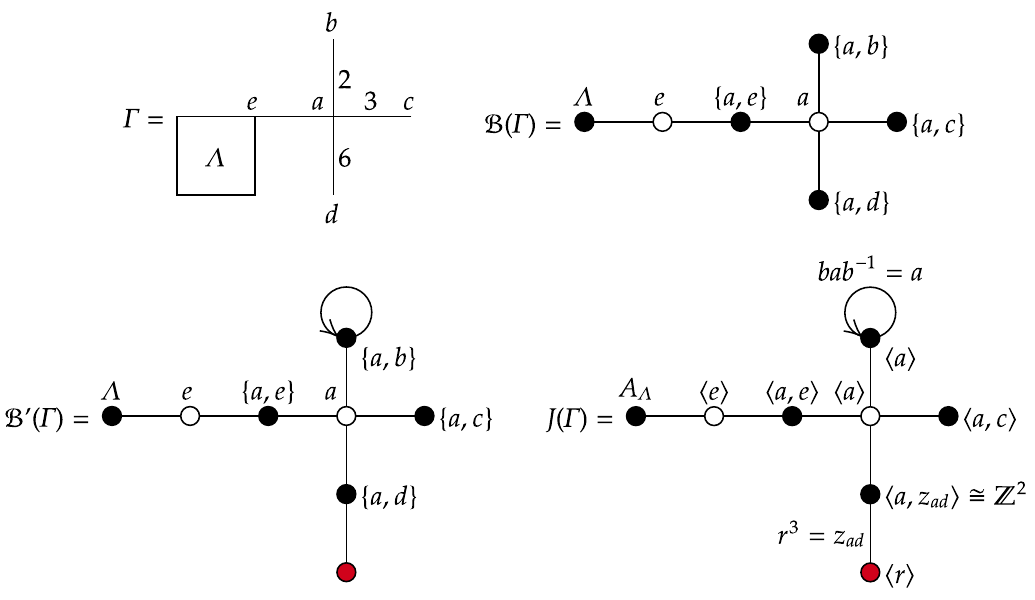}
    \caption{Example of the construction of the JSJ decomposition over virtually cyclic groups. First, one identifies the big chunks of $\Gamma$, which correspond to the black vertices of $\mathcal{B}(\Gamma)$. In this example, there are two even leaves, the toral leaf $\{a,b\}$ and the braided leaf $\{a,d\}$, with edge label $6$. Therefore, in $\mathcal{B}'(\Gamma)$ we glue a loop to $\{a,b\}$ and a free edge to $\{a,d\}$. Next, every white vertex is labelled with the subgroup generated by the corresponding vertex of~$\Gamma$. Every black vertex which is not an even leaf is labelled with the corresponding parabolic subgroup, while even leaves are further decomposed as HNN extensions (for toral leaves) or as amalgamated free products of $\Z^2$ and $\Z$ (for braided leaves). Every edge of $J(\Gamma)$ without a label represents the embedding of the white vertex group into the black vertex group, induced by inclusion of subgraphs of~$\Gamma$. }
    \label{fig:expansion}
\end{figure}

\begin{rem}[Minimality of $J(\Gamma)$]
    By construction, whenever $e$ is a leaf of $J(\Gamma)$ and $w$ is its tip, the edge group $H$ associated to $e$ properly embeds in the vertex group $V$ for $w$. This is because, if $w$ is black then $V$ is a big chunk parabolic, which is not cyclic; otherwise $w$ is red, $V=\langle r\rangle$ for some element $r$, and the edge group is generated by a proper power of $r$. By \cite[Lemma 7.12]{Bass}, this condition, together with the fact that the decomposition is finite, implies that $A_\Gamma$ acts minimally on the Bass-Serre tree of $J(\Gamma)$, so we are in the setting of Notation~\ref{notation:action_on_trees}.
\end{rem}

\begin{thm}\label{thm:JSJ}
    Let $\Gamma$ be a connected graph on at least three vertices. If $T$ is a $VC$-tree, every vertex group of $J(\Gamma)$ acts elliptically on $T$. As a consequence, $J(\Gamma)$ is a JSJ-decomposition for $A_\Gamma$ over virtually cyclic subgroups. 
\end{thm}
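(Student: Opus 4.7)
The plan is to prove the stronger claim that every vertex group of $J(\Gamma)$ acts elliptically on any $VC$-tree $T$ of $A_\Gamma$; the JSJ property then follows routinely, since edge groups of $J(\Gamma)$ are subgroups of vertex groups (hence universally elliptic), and ellipticity of vertex groups on every $VC$-tree in particular gives the domination condition. I would split the vertex groups of $J(\Gamma)$ into four families. The first consists of big big chunks: for a big chunk $\Lambda$ on at least three vertices, the restricted action of $A_\Lambda$ on $T$ has $VC$ edge stabilisers, so Lemma~\ref{lem:NC_for_all} forces $A_\Lambda$ to have a global fixed point.

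The second family consists of vertices $v$ of $\Gamma$ with at least two neighbours, which covers the white vertex groups $\langle v\rangle$ (separating vertices have valence at least two) as well as the non-tip generator $a$ of every leaf (since the tip has valence one while $\Gamma$ is connected on at least three vertices). I would mimic Case~II of the proof of Lemma~\ref{lem:hamcycle}: fix a two-edge segment with middle $v$ and endpoints $u, w$, and choose $c_i$ centralising $v$ in the adjacent dihedral ($c_1 = u$ if $m_{uv} = 2$ and $c_1 = z_{uv}$ otherwise, similarly for $c_2$), so that each $\langle v, c_i\rangle$ is isomorphic to $\Z^2$. Assuming for contradiction that $v$ is loxodromic with axis $L$, Lemma~\ref{lem:commuting_elm_and_minset} forces each $c_i$ to preserve $L$, and the translation map $\Z^2 \to \Z$ on $L$ produces non-trivial $h_i = v^{\alpha_i}c_i^{\beta_i}$ with $\beta_i \ne 0$ fixing $L$ pointwise. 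Van der Lek's theorem identifies $A_{\{u,v\}} \cap A_{\{v,w\}} = \langle v\rangle$, which rules out $h_1$ and $h_2$ sharing a non-trivial common power (any equation $h_1^p = h_2^q$ would force a nonzero power of $c_2$ into $\langle v\rangle$). Thus $\langle h_1, h_2\rangle$ is not virtually cyclic yet fixes an edge of $L$, contradicting the $VC$-hypothesis on edge stabilisers of $T$.

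For the third family, even leaves, the toral case is immediate as the vertex group is simply $\langle a\rangle$. For a braided leaf $\{a, b\}$ of label $2m \ge 4$, I would first restrict the action to $\da{2m} = A_{\{a,b\}}$: either this has a global fix, or its minimal invariant subtree is a non-trivial $VC$-splitting of $\da{2m}$, on which the centre $z$ is elliptic by the argument of Example~\ref{ex:jsjSplitDihedral}. Either way $z$ is elliptic on $T$, and then so is $r = ab$ because $r^m = z$ yields $|r| = 0$, which handles the red vertex $\langle r\rangle$. For the black vertex $\Z^2 = \langle a, z\rangle$, both generators are elliptic and commute, so a standard bridge argument shows their fixed sets intersect: otherwise $z$ would preserve $\mathrm{Fix}(a)$, fix the endpoint of the bridge between $\mathrm{Fix}(a)$ and $\mathrm{Fix}(z)$ lying in $\mathrm{Fix}(a)$, and hence fix the whole bridge pointwise, reaching $\mathrm{Fix}(z)$ for a contradiction.

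For the fourth family, odd leaves, the vertex group is the full dihedral $\da{n}$. If $\da{n}$ had no global fix on $T$, its minimal invariant subtree $T_0$ would be a non-trivial $VC$-splitting of $\da{n}$, with $\langle x\rangle$ and $\langle y\rangle$ elliptic by Example~\ref{ex:jsjSplitDihedral}. A case analysis of vertex and edge stabilisers---using that the only proper subgroups of $\da{n}$ containing $\langle x\rangle$ or $\langle y\rangle$ are themselves, and that a $VC$ subgroup contained in both must equal $\langle z\rangle$---identifies $T_0$ with the Bass-Serre tree of the JSJ of $\da{n}$. On that tree, $a = y^{-(n-1)/2} x$ is a length-two reduced word in the amalgam, hence loxodromic; since loxodromicity on an invariant subtree persists on the ambient tree, this contradicts the second family. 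I expect this odd-leaf step to be the main obstacle, as it requires rigidity of the $VC$-splittings of $\da{n}$ rather than just ellipticity of specific elements as in the earlier cases.
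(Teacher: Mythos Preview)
Your case division misses an entire family of black vertex groups: two-vertex big chunks that are \emph{not} leaves. If $\{a,b\}$ is an edge of $\Gamma$ with both endpoints of valence at least two (for instance, the middle edge of a path on four vertices), then $\{a,b\}$ is a big chunk and its black vertex group in $J(\Gamma)$ is the full $A_{\{a,b\}}$, which is a dihedral (or $\Z^2$). Your family~2 establishes that $a$ and $b$ are individually elliptic, but nothing in your proposal shows that $\langle a,b\rangle$ fixes a point. This is precisely Case~I in the paper's proof, and the paper handles it via a separate ping-pong argument (Claim~\ref{claim:dihedral_nofix}): working inside the minset of $z_{ab}$, one shows that $\mathrm{Fix}(a^n)$ and $\mathrm{Fix}(b^n)$ are single points (else $\langle a^n,z_{ab}\rangle$ would fix an edge), and then disjointness of these fixed points would force $\langle a,b\rangle$ to be free by ping-pong.

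Your family~4 argument for odd leaves has a related gap. The assertion that ``the only proper subgroups of $\da{n}$ containing $\langle x\rangle$ are $\langle x\rangle$ itself'' is false: for example, the preimage in $\da{n}$ of $\langle \bar x,\bar y\bar x\bar y^{-1}\rangle\cong \Z/2*\Z/2$ under the central quotient $\da{n}\to \Z/2*\Z/n$ is a proper subgroup strictly containing $\langle x\rangle$. So the rigidity of $T_0$ does not follow from your case analysis; in fact $T_0$ need not be the standard tree (subdivisions already give other $VC$-trees). The paper avoids this entirely by observing that for odd $m_{ab}$ the generators $a$ and $b$ are \emph{conjugate} in $\langle a,b\rangle$, so $b$ is elliptic once $a$ is, and then the same ping-pong Claim~\ref{claim:dihedral_nofix} applies. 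That single claim is the missing ingredient in both of your gaps, and it is exactly the step you flagged as ``the main obstacle''.
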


\begin{proof}
We first record two facts which we shall use multiple times.
\begin{claim}\label{claim:vertex-fix}
    Let $\{a,b,c\}\subseteq\Gamma$ be a two-edges segment, with $a\not\in \link_\Gamma(c)$. Then $b$ acts elliptically on $T$. 
\end{claim}
\begin{proof}[Proof of Claim~\ref{claim:vertex-fix}]
    Let $g$ be either $a$ if $m_{ab}=2$, or $z_{ab}$ if $m_{ab}\ge 3$. Similarly, define $h$ as $c$ if $m_{bc}=2$, or $z_{bc}$ if $m_{bc}\ge 3$. If $b$ were loxodromic then one could find $k_1,k_2,l_1,l_2\in \mathbb{Z}$, with $k_1,l_1\neq 0$, such that the non-virtually-cyclic subgroup $\langle g^{k_1} b^{k_2}, h^{l_1} b^{l_2}\rangle$ would fix an edge, as in the proof of Lemma~\ref{lem:hamcycle}. 
\end{proof}

\noindent With the same techniques, one gets:
\begin{claim}\label{claim_dihedral_centre}
    Let $\{a,b\}$ be an edge of $\Gamma$, with $m_{ab}\ge 3$. Then $z_{ab}$ acts elliptically on $T$.
\end{claim}
\begin{proof}[Proof of Claim~\ref{claim_dihedral_centre}]
    Again, if $z_{ab}$ acted loxodromically, one could find $k_1,k_2,l_1,l_2\in \mathbb{Z}$, with $k_1,l_1\neq 0$, such that the non-virtually-cyclic subgroup $\langle a^{k_1} z_{ab}^{k_2}, b^{l_1} z_{ab}^{l_2}\rangle$ would fix an edge.
\end{proof}

\noindent Going back to the proof, we want to show that every vertex group in $J(\Gamma)$ acts elliptically on $T$. By construction, every white vertex group of $J(\Gamma)$ is a subgroup of a black vertex group; moreover, if $\langle a,b\rangle$ is a braided even leaf, the associated red vertex group is generated by a root $r$ of $z_{ab}$, and in particular if $z_{ab}$ acts elliptically on $T$ then so does $r$. By this argument, we only need to consider black vertex groups. Furthermore, big big chunk parabolics must act elliptically, since by Lemma~\ref{lem:NC_for_all} they cannot split over virtually cyclic groups. So we are left to show that the vertex group associated to a big chunk on two vertices $\{a,b\}$ is elliptic. There are two cases to analyse.
\par\medskip
\noindent \textbf{CASE I: $\{a,b\}$ is not a leaf.} In this setting both $a$ and $b$ are separating vertices, so there exist two edges $\{v,a\}$ and $\{b,w\}$ such that $a$ separates $v$ from $b$, and similarly $b$ separates $w$ from $a$. By Claim~\ref{claim:vertex-fix}, $a$ and $b$ both act elliptically on $T$. Then the black vertex group associated to $\{a,b\}$, which is the dihedral $\langle a,b\rangle$, acts elliptically by the following Claim:

\begin{claim}\label{claim:dihedral_nofix}
    Let $\{a,b\}$ be an edge of $\Gamma$. If $a$ and $b$ both act elliptically on $T$, then $\langle a,b\rangle$ fixes a point. 
\end{claim}

\begin{proof}[Proof of Claim~\ref{claim:dihedral_nofix}]
If $m_{ab}=2$ then $\langle a,b\rangle\cong \Z^2$ must act elliptically, as otherwise one of the generators should act loxodromically by Lemma~\ref{lem:Z^2_no_fix}. Then suppose that $m_{ab}\ge 3$, and let $z=z_{ab}$. By Claim~\ref{claim_dihedral_centre} $z$ acts elliptically, so let $T'$ be the minset of $z$. Notice that both $a$ and $b$ act on $T'$, by Lemma~\ref{lem:commuting_elm_and_minset}. Moreover, since both $a$ and $z$ act elliptically, the whole subgroup $\langle a,z\rangle$ fixes a point of $T$, by the contrapositive of Lemma~\ref{lem:Z^2_no_fix}; in other words, $a$ (and symmetrically $b$) must fix a point in $T'$.

Now, for every $n\in\Z-\{0\}$, the fixed subtree $\text{Fix}_{T'}(a^n)$ is a single point $p_a$ not depending on $n$, because, if $\text{Fix}_{T'}(a^n)$ properly contained the non-trivial subtree $\text{Fix}_{T'}(a)$, then the non-virtually-cyclic subgroup $\langle a^n, z_{ab}\rangle$ would fix an edge. As a consequence, $\text{Fix}_{T'}(a)$ coincides with the \emph{stable} fixed point set
    $$\{p_a\}=\text{Fix}_{T'}^\infty{(a)}\coloneq \bigcup_{n\in \mathbb{N}-\{0\}}\text{Fix}_{T'}(a^n).$$
    A similar argument shows that $\text{Fix}_{T'}^\infty{(b)}=\text{Fix}_{T'}{(b)}$ is a single point $p_b$. Now suppose that $a$ and $b$ do not fix a common vertex, and we claim that this leads to the contradiction that $a$ and $b$ generate a free group. Indeed, let $\gamma=[p_a,p_b]\subset T'$, and decompose $T'=X_a\cup \gamma\cup X_b$ where $X_a\cap \gamma=\{p_a\}$ and similarly $X_b\cap \gamma=\{p_b\}$. Since $\{p_a\}=\text{Fix}_{T'}^\infty{(a)}$, we see that $a^n(X_b)\subset X_a$ for every $n\in \Z-\{0\}$, and symmetrically with $a$ and $b$ swapped. Then the ping-pong lemma (see e.g. \cite{pingpong}) shows that $\langle a,b\rangle$ is non-abelian free, a contradiction. 
\end{proof}

\noindent \textbf{CASE II: $\{a,b\}$ is a leaf.} Let $b$ be the tip of the leaf. Since $\Gamma$ has at least three vertices and is connected, there exists $c\in \link_\Gamma(a)-\{b\}$. Applying Claim~\ref{claim:vertex-fix} to the two-edges segment $\{b,a,c\}$ shows that $a$ acts elliptically on $T$. There are now three cases to analyse, depending on $m_{ab}$.
\begin{itemize}
    \item If $m_{ab}=2$, the black vertex group associated to the leaf is $\langle a\rangle$, and there is nothing else to prove.
    \item If $m_{ab}>2$ is even then $z_{ab}$ acts elliptically by Claim~\ref{claim_dihedral_centre}; thus the black vertex group associated to the leaf, which is $\langle a, z_{ab}\rangle\cong \Z^2$, acts elliptically by Lemma~\ref{lem:Z^2_no_fix}.
    \item If $m_{ab}>2$ is odd then $b$ acts elliptically as well, since it is conjugated to $a$; hence the black vertex group associated to the leaf, which is $\langle a,b\rangle$, fixes a point by Claim~\ref{claim:dihedral_nofix}.
\end{itemize}
\noindent The proof of Theorem~\ref{thm:JSJ} is now complete.
\end{proof}

\section{Acylindrical hyperbolicity of automorphism groups}\label{sec:AH}
\noindent In this Section we prove that the automorphism group of an Artin group $A_\Gamma$ is acylindrically hyperbolic, provided the existence of a separating vertex of $\Gamma$ which is not central in $A_\Gamma$. To do so, from the JSJ decomposition from Section~\ref{sec:JSJ} we shall build an $\aut{A_\Gamma}$-invariant tree, which is an instance of the so-called tree of cylinders from \cite{guirardel2011cylinder}.

\subsection{Background on acylindrical hyperbolicity}
The notion of acylindricity is due to Sela, who originally formulated it for actions on trees, and was then extended by Bowditch to action on general metric spaces \cite{sela1997acylindrical,bowditch2008tight}. 

\begin{defn}[Acylindricity]
    Let $G$ be a group and let $(X,\dist)$ be a metric space. An action of $G$ on $(X,\dist)$ by isometries is \emph{acylindrical} if, for every $\varepsilon\in\mathbb R_{\ge0}$ there exist $L\in\mathbb R_{\ge0}$ and $N\in\mathbb N$ such that, for every $x,y\in X$, if $\dist (x,y)\ge L$, then 
        \[
        \left|\{g\in G: \text{$\dist (x,gx)<\epsilon$ and $\dist (y,gy)<\epsilon$}\}\right|\le N.
        \]
\end{defn}

In the case where $(X,\dist)$ is a simplicial tree endowed with the standard metric (i.e. edges are assigned unit length), this definition is equivalent to Sela's original one \cite[Section 2]{bowditch2008tight}: there exist $L,N\in\mathbb N$ such that, for every $u,v\in\ver X$, if $\dist (u,v)\ge L$, then $\Stab{G}{u}\cap \Stab G v$ has order bounded by $N$. In other words, an action on a tree $X$ is acylindrical if geodesic segments of length at least $L$ are fixed by at most $N$ elements.

Recall that a geodesic metric space $X$ is \emph{hyperbolic} if there exists $\delta\ge0$ such that, for every geodesic triangle in $X$, each side is contained in the $\delta$-neighbourhood of the union of the other two. 

\begin{defn}[\cite{Osin_AH}]
A group is \emph{acylindrically hyperbolic} if it is not virtually cyclic and it admits an acylindrical action on a hyperbolic metric space with unbounded orbits.
\end{defn}

\subsection{Trees of cylinders}
We now recall the construction of the tree of cylinders from \cite{guirardel2011cylinder}. For experts, we shall specialise the general definition to the case of virtually cyclic subgroups, with commensurability as the so-called admissible relation.
\begin{defn}[Cylinder]\label{defn:C_e}
        Let $(T,\Omega)$ be a $VC$-tree for a group $G$. Given an edge $e$ of $T$, its \emph{cylinder} $C_e$ is the subforest of all edges $e'$ such that $\Stab{\Omega}{e}$ and $\Stab{\Omega}{e'}$ are commensurable (i.e. their intersection has finite-index in both). 
\end{defn}
By \cite[Lemma 4.2]{guirardel2011cylinder}, a cylinder is actually a subtree, so two cylinders can overlap on at most a vertex. 

\begin{defn}[Tree of cylinders]\label{defn:T_c}
    The \emph{tree of cylinders} of $(T,\Omega)$ is the bipartite tree $T_c$ with vertex set $\ver{T_c}=V_0\sqcup V_1$ defined as follows:
    \begin{itemize}
        \item $V_0$ is the set of vertices $x$ of $T$ belonging to at least two distinct cylinders;
        \item $V_1$ is the set of cylinders of $T$;
        \item there is an edge $e=\{x,C\}$ between $x \in V_0$ and $C\in V_1$ if and only if $x$ (viewed as a vertex of $T$) belongs to $C$ (viewed as a subtree of $T$).
    \end{itemize}
    As $G$ acts on the set of cylinders, there is a natural isometric $G$-action on $T_c$ which we denote by $\Omega_c$. It is clear from the construction that $\Stab{\Omega_c}{x}=\Stab{\Omega}{x}$ for every $x\in V_0$; moreover, for every edge $e$ of $T$ we have that $\Stab{\Omega_c}{C_e}$ is the \emph{commensurator} of $\Stab{\Omega}{e}$ in $G$, i.e. the set of all $g\in G$ such that $\Stab{\Omega}{e}$ and $g\Stab{\Omega}{e}g^{-1}$ are commensurable.
\end{defn} 
It is easy to see that $T_c$ is indeed a tree \cite{gui04}; moreover, by \cite[Lemma 4.9]{gui04}, $\Omega_c$ is minimal if $\Omega$ is minimal. 

The following proposition is known to experts and included for completeness. The proof is largely based on personal communications with Yassine Guerch and Gilbert Levitt, which we both thank. 
\begin{prop}[see \cite{guirardel2011cylinder}]\label{prop:acyl_from_JSJ} 
Let $G$ be a group whose torsion subgroups have uniformly bounded cardinality. Let $(T,\Omega)$ be a JSJ-tree over virtually cyclic subgroups of $G$ which is not a point. Then:
\begin{itemize}
    \item $G$ acts minimally and acylindrically on $T_c$;
    \item The centre $Z(G)$ acts trivially;
    \item The $G/Z(G)$-action extends to $\aut{G}$.
\end{itemize}
\end{prop}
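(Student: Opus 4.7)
My plan is to establish minimality, acylindricity, and $\aut{G}$-equivariance of the action on $T_c$ essentially independently, with acylindricity being the main content.

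For minimality I would simply invoke \cite[Lemma 4.9]{gui04}: since the action $\Omega$ on $T$ is minimal by hypothesis (Notation~\ref{notation:action_on_trees}), the induced action $\Omega_c$ on $T_c$ is also minimal, so nothing really needs to be reproved.

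For acylindricity on a simplicial tree it suffices to produce constants $L, N$ such that any segment of length at least $L$ is pointwise fixed by at most $N$ elements. I would take $N$ to be the uniform bound on the cardinality of finite subgroups of $G$ given by hypothesis, and $L$ a small constant like $4$ (up to possibly enlarging it slightly). The mechanism is the following. For $x \in V_0$, we have $\Stab{\Omega_c}{x} = \Stab{\Omega}{x}$ by construction of $T_c$. Given a segment in $T_c$ of the shape $x_0 - C_0 - x_1 - C_1 - x_2$ with $x_i \in V_0$ and $C_0 \neq C_1$, any stabilising element $g$ fixes $x_0, x_1, x_2$ \emph{in $T$}, hence fixes the geodesics $[x_0,x_1]$ and $[x_1,x_2]$ of $T$ pointwise. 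Since cylinders are subtrees of $T$, these geodesics lie inside $C_0$ and $C_1$ respectively, so $g$ belongs to pointwise stabilisers $\Stab{\Omega}{e_0}$ and $\Stab{\Omega}{e_1}$ of edges $e_i$ from the two cylinders. By definition of cylinders, $\Stab{\Omega}{e_0}$ and $\Stab{\Omega}{e_1}$ are non-commensurable virtually cyclic subgroups; since every infinite subgroup of an infinite virtually cyclic group has finite index, their intersection must be finite, hence of cardinality at most $N$. Any sufficiently long segment of $T_c$ contains such a $V_0$-to-$V_0$ sub-segment, which upgrades the bound to arbitrary long segments.

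The extension of the $G$-action to $\aut{G}$ is obtained from the canonicity of the tree-of-cylinders construction. Given $\phi \in \aut{G}$, the twisted action $(T, \Omega \circ \phi^{-1})$ is again a JSJ tree over virtually cyclic subgroups, lying in the same deformation space as $(T, \Omega)$. The canonicity statements of \cite{guirardel2011cylinder} provide a $G$-equivariant isomorphism between the trees of cylinders of these two trees; this isomorphism is the isometry of $T_c$ that realises $\phi$, and the assignment $\phi \mapsto \phi_c$ is the desired homomorphism $\aut{G} \to \mathrm{Isom}(T_c)$ extending the $G$-action.

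The main obstacle I anticipate is book-keeping: the acylindricity argument is clean once one uses the identification $\Stab{\Omega_c}{x} = \Stab{\Omega}{x}$ for $x \in V_0$, but one has to treat separately segments whose endpoints lie in $V_1$, since fixing a cylinder as a set is much weaker than fixing an edge inside it. Enlarging $L$ by a small constant to force two $V_0$-vertices at $T_c$-distance at least four handles this cleanly. The $\aut{G}$-extension, by contrast, I would treat as a black-box application of the canonicity theory of \cite{guirardel2011cylinder} rather than reproving it.
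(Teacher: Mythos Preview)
Your proposal is correct and follows essentially the same route as the paper. The paper also cites \cite[Lemma 4.9]{gui04} for minimality and \cite[Corollary 4.10]{guirardel2011cylinder} for the $\aut{G}$-extension, and proves acylindricity by the same mechanism: pass to two $V_0$-vertices $p',q'$ at $T_c$-distance $\ge 4$ inside a long segment (the paper takes $L=6$), observe that $\Stab{\Omega_c}{p'}\cap\Stab{\Omega_c}{q'}=\Stab{\Omega}{p'}\cap\Stab{\Omega}{q'}$ is contained in an edge stabiliser of $T$ and hence virtually cyclic, and rule out the infinite case because it would force all edges of the $T$-geodesic $[p',q']$ into a single cylinder, contradicting $\dist_{T_c}(p',q')\ge 4$; your version with the explicit path $x_0\!-\!C_0\!-\!x_1\!-\!C_1\!-\!x_2$ and two non-commensurable edge stabilisers is just the contrapositive of this.
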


\begin{proof} We already noticed that the $G$-action is minimal, so we now prove acylindricity. Let $p,q\in V(T_c)$ be at distance at least $6$, and we claim that $\Stab{\Omega_c}{p}\cap \Stab{\Omega_c}{q}$ has uniformly bounded cardinality. Since the tree is bipartite, we can find $p',q'\in V_0$ which lie on a geodesic $[p,q]$ and are at distance at least $4$. Since any element fixing $p$ and $q$ must also fix $p$ and $q'$, it is enough to bound the cardinality of $\Stab{\Omega_c}{p'}\cap \Stab{\Omega_c}{q'}$. Since $p',q'\in V_0$, the latter equals $H\coloneq \Stab{\Omega}{p'}\cap \Stab{\Omega}{q'}$, which is therefore virtually cyclic as it is contained inside the stabiliser of some edge of $T$. If $H$ is finite then we are done, as torsion subgroups of $G$ have uniformly bounded cardinality. Otherwise $H$ is commensurable to all edge stabilisers on a $T$-geodesic between $p'$ and $q'$, which therefore belong to the same cylinder, violating the fact that $\dist_{T_c}(p',q')\ge 4$.

Moving to the second bullet, every element in the centre fixes every cylinder, as it maps edges of $T$ to edges with the same stabilisers. As every vertex in $V_0$ belongs to at least two cylinders, this means that the centre acts trivially on $T_c$. Then the $G/Z(G)$-action, which we identify with $\inn{G}\le \aut{G}$, extends to $\aut{G}$ by \cite[Corollary 4.10]{guirardel2011cylinder}.
\end{proof}

\subsection{Application to automorphism groups of Artin groups}

\begin{cor}\label{cor:acylindrical}
    Let $A_\Gamma$ be a torsion-free Artin group such that $\Gamma$ is connected and has a separating vertex $s$ which does not centralise the whole group. Then $\aut{A_\Gamma}$ is acylindrically hyperbolic.
\end{cor}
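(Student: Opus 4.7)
The plan is to run the JSJ machinery from Section~\ref{sec:JSJ} together with Proposition~\ref{prop:acyl_from_JSJ}. Since $\Gamma$ has a separating vertex $s$, it contains at least three vertices, so Theorem~\ref{thm:JSJ} produces a non-trivial JSJ decomposition $J(\Gamma)$ of $A_\Gamma$ over virtually cyclic subgroups; let $T$ be its Bass-Serre tree, which is non-trivial because $s$ yields a visual splitting. As $A_\Gamma$ is torsion-free, the hypothesis of Proposition~\ref{prop:acyl_from_JSJ} is trivially satisfied, so we obtain a minimal acylindrical action of $\aut{A_\Gamma}$ on the tree of cylinders $T_c$, a $0$-hyperbolic space.

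\textbf{Unbounded orbits on $T_c$.} The induced $A_\Gamma$-action on $T_c$ is minimal, and $A_\Gamma$ maps into $\aut{A_\Gamma}$ via inner automorphisms, so it suffices to verify that $T_c$ is not a single vertex. I would argue by contradiction: if $T_c$ were a point then $T$ would consist of a single cylinder, so every conjugate $g\langle s\rangle g^{-1}$ would be commensurable to $\langle s\rangle$ in $A_\Gamma$. Pick $g\in A_\Gamma$ with $[g,s]\neq 1$ using the hypothesis; commensurability would yield $gs^ng^{-1}=s^m$ for nonzero integers $n,m$. Passing to the abelianisation (where $\bar s$ has infinite order, since $s$ is a standard generator) forces $n=m$, so $g$ centralises $s^n$. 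One would then invoke Artin-group tools, such as Van der Lek's theorem \cite{VanDerLek} and the study of parabolic closures of cyclic subgroups, to upgrade this to the statement that $g$ centralises $s$ itself, producing the required contradiction.

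\textbf{Not virtually cyclic.} Inner automorphisms embed $A_\Gamma/Z(A_\Gamma)$ into $\aut{A_\Gamma}$. Since $\Gamma$ has a separating vertex $s$, the group $A_\Gamma$ splits non-trivially as an amalgamated free product, with neither vertex group coinciding with $\langle s\rangle$ because $|\ver\Gamma|\ge 3$. Standard Bass-Serre theory then shows $A_\Gamma$ is not virtually abelian, so $A_\Gamma/Z(A_\Gamma)$ is not virtually cyclic, and neither is $\aut{A_\Gamma}$.

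\textbf{Main obstacle.} The central difficulty is verifying that $T_c$ is not a point, i.e.\ that the commensurator of $\langle s\rangle$ in $A_\Gamma$ is a proper subgroup. Translating ``$s$ is not central'' into ``there exists a non-commensurable conjugate'' requires ruling out the possibility that $g$ centralises $s^n$ without centralising $s$, which is a form of root stability for centralisers; this is where Artin-group-specific machinery such as parabolic closures and Van der Lek's theorem, in the spirit of Section~\ref{sec:cyclicsplit}, is expected to play the essential role.
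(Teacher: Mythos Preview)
Your strategy rests on a misreading of Proposition~\ref{prop:acyl_from_JSJ}: that proposition says $A_\Gamma$ acts minimally and acylindrically on $T_c$ and that this action \emph{extends} to $\aut{A_\Gamma}$, but it does not assert that the $\aut{A_\Gamma}$-action is itself acylindrical. In general there is no reason for it to be---Dehn-twist automorphisms along the JSJ splitting can fix arbitrarily long segments of $T_c$---so even granting that $T_c$ is unbounded and that $\aut{A_\Gamma}$ is not virtually cyclic, you cannot conclude acylindrical hyperbolicity directly from the definition. This is exactly why the paper's proof is substantially longer than your outline.

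The paper instead manufactures a single WPD loxodromic. First it shows $A_\Gamma$ itself is acylindrically hyperbolic via \cite[Corollary~2.2]{Minasyan_Osin}: taking $g=t$ to be a \emph{vertex} of $\Gamma$ not commuting with $s$ (so $\langle s,t\rangle$ is free or a non-abelian dihedral), one checks directly that $t$ does not commensurate $\langle s\rangle$, making $\langle s\rangle$ weakly malnormal. This also dissolves your ``main obstacle'': restricting to a standard generator reduces the question to an elementary computation inside a two-generator subgroup, and the general root-stability statement you aim for is neither needed nor known. Torsion-freeness plus acylindrical hyperbolicity of $A_\Gamma$ then allows one to invoke \cite[Corollary~E]{bogopolski2022equations} to obtain a distinguished element $w$ such that any endomorphism fixing $w$ is conjugation by a power of $w$. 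The core step (Claim~\ref{claim:w_loxo}) shows $w$ is loxodromic on $T_c$, using the retractions onto big-chunk parabolics from Remark~\ref{rem:retraction} to derive a contradiction if $w$ were elliptic. Since $w$ is loxodromic for the acylindrical $A_\Gamma$-action it is WPD, and \cite[Proposition~3.8]{genevois2021acylindrical} then yields acylindrical hyperbolicity of $\aut{A_\Gamma}$.
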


\begin{proof}
By hypothesis, there is a vertex $t\in \Gamma$ which generates either a non-abelian dihedral or a free group with $s$. It is easy to see that $t$ does not commensurate $\langle s\rangle$, so $\langle s\rangle$ is \emph{weakly malnormal}, meaning that it has finite (in this case, trivial) intersection with one of its conjugates. Then, since $A_\Gamma$ splits as an amalgamated product over $\langle s\rangle$, it is acylindrically hyperbolic by \cite[Corollary 2.2]{Minasyan_Osin}. In turn, since it is also torsion-free, \cite[Corollary E]{bogopolski2022equations} provides an element $w$ such that, if an endomorphism $\phi\colon A_\Gamma\to A_\Gamma$ fixes $w$, then $\phi$ is the conjugation by a power of~$w$.

Now let $(T,\Omega)$ be the Bass-Serre tree of the JSJ splitting $J(\Gamma)$ from Definition~\ref{defn:J_gamma}, which is not a point since $\Gamma$ is not a single big chunk. By Proposition~\ref{prop:acyl_from_JSJ}, $A_\Gamma$ has a minimal, acylindrical action on the tree of cylinders $T_c$, which extends to $\aut{A_\Gamma}$. We take a small detour to prove the following:

\begin{claim}\label{claim:w_loxo}
    The element $w$ acts loxodromically on $T_c$, which is therefore unbounded.
\end{claim}

\begin{proof}[Proof of Claim~\ref{claim:w_loxo}] Towards a contradiction, suppose that $w$ fixes some vertex of $T_c$. If it fixes some $x\in V_0$ then $w\in \Stab{\Omega}{x}$, which is a vertex group of $T$ and is therefore contained in a big chunk parabolic $H$. However, by Remark~\ref{rem:retraction} there is a retraction $\rho\colon A_\Gamma\to H$, which is absurd as $\rho$ fixes $w$ and is not a conjugation.

Now assume that $w$ fixes the cylinder $C_e$, for some edge $e$ of $T$. We want to show that $w$ acts elliptically on $T$, from which we shall deduce a contradiction with the exact same argument as above.

Suppose first that $e$ connects a black and a red vertex, so that $\Stab{\Omega}{e}=g\langle z_{ab}\rangle g^{-1}$ for some braided even leaf $\{a,b\}$ and some $g\in A_\Gamma$. Up to replacing $w$ by $gwg^{-1}$, which has the same characterisation as $w$ in terms of endomorphisms, we can assume that $g=1$. Since $w$ commensurates $\Stab{\Omega}{e}$, there are non-trivial integers $m,n\in \Z-\{0\}$ such that $w z_{ab}^m w^{-1}=z_{ab}^n$. By looking at the retraction $\rho\colon A_\Gamma\to \langle a,b\rangle $, where $z_{ab}$ is central, we see that $m=n$, so $w$ and $z_{ab}^n$ commute. But then the conjugation $\psi$ by $z_{ab}^n$ fixes $w$, so $\psi$ must coincide with the conjugation by a power of $w$. Hence it must be that $w$ is a root of $z_{ab}^n$, since a torsion-free acylindrically hyperbolic group is centerless \cite[Corollary 7.2]{Osin_AH}. This means that $w$ acts elliptically on $T$ as one of its powers does, as required.

We are left to consider the case where $e$ is an edge whose stabiliser is conjugate to $\langle s\rangle$, for some separating vertex $s$. As above, up to conjugating $w$ we can assume that $w$ commensurates $\langle s\rangle$, so there are non-trivial integers $m,n\in \Z-\{0\}$ such that $w s^m w^{-1}=s^n$. By looking at the map $\rho\colon A_\Gamma\to \Z$ sending every generator to $1$ we see that $m=n$, so exactly as above we get that $w$ is a root of $s^n$, and in particular acts elliptically on $T$, as required. 
\end{proof}

Finally, to prove that $\aut{A_\Gamma}$ is acylindrically hyperbolic it is enough to verify the requirements of \cite[Proposition 3.8]{genevois2021acylindrical}:
\begin{itemize}
    \item $\aut{A_\Gamma}$ is not virtually cyclic as it contains $A_\Gamma$ (here we are again using that $A_\Gamma$ has no centre);
    \item The $A_\Gamma$-action on $T_c$ is acylindrical and minimal, hence orbits are unbounded as so is $T_c$. Thus the action is non-elementary by \cite[Theorem 1.1]{Osin_AH};
    \item $w$ is WPD, as it is a loxodromic element in an acylindrical action. \qedhere
\end{itemize}
\end{proof}

\section{Isomorphism invariance of big chunk parabolics}\label{sec:isoinvariance}
\noindent Using the JSJ decompositions we constructed, in this final Section we prove that, if two Artin groups are isomorphic, then any isomorphism must preserve the isomorphism type of big chunk parabolics. In fact, we show that every isomorphism preserves the conjugacy class of any big chunk parabolic that is not a toral leaf. \par 

We start with a lemma about dihedral Artin groups:
\begin{lemma}\label{claim:root_of_dihedral}
    Let $\langle a,b\mid (ab)^m=(ba)^m\rangle$ be a dihedral Artin group with label $2m$. If $x\in \langle a, z_{ab}\rangle$ is a primitive element admitting an $n$-th root $w\in \langle a,b\rangle$, then $n\le m$.
\end{lemma}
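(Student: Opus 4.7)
The approach is to apply Bass-Serre theory to the amalgamated-product decomposition $A_{2m}\cong\langle a,z\rangle *_{\langle z\rangle=\langle r^m\rangle}\langle r\rangle$, where $r=ab$ and $z=z_{ab}$, as recalled in Definition~\ref{defn:J_gamma}. Write $A_1=\langle a,z\rangle\cong\Z^2$, $A_2=\langle r\rangle$, and let $v_1,v_2$ be their stabilised vertices in the Bass-Serre tree $T$, which lie in opposite bipartite classes. Two structural facts will be used repeatedly: first, $z$ is central in all of $A_{2m}$ (not merely in $A_1$); second, every edge of $T$ incident to $v_1$ has stabiliser $\langle z\rangle$, by centrality of $\langle z\rangle$ in $A_1$. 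As a preliminary observation, $x$ is nontrivial (being primitive) and lies in $A_1$, hence is elliptic; since $w^n=x$ and nonzero powers of loxodromic elements are loxodromic, $w$ must be elliptic as well, so $w$ is conjugate into either $A_1$ or $A_2$.

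First I would handle the case $w=gr^jg^{-1}$ with $j\ne 0$. Here $gr^{jn}g^{-1}=x$ lies in $A_1$. If $r^{jn}\notin\langle r^m\rangle$, its unique fixed vertex in $T$ is $v_2$, so the unique fixed vertex of $x$ would be $gv_2$; but $x$ fixes $v_1$, forcing $gv_2=v_1$ and contradicting the bipartiteness of $T$. Hence $m\mid jn$ and $r^{jn}=z^{jn/m}$; by centrality of $z$ this yields $x=z^{jn/m}$, and primitivity of $x$ in $A_1$ forces $|jn/m|=1$, so $|n|\le|jn|=m$.

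Next I would handle the case $w=gyg^{-1}$ with $y=a^pz^q\in A_1$. If $p\ne 0$, then $y^n=a^{pn}z^{qn}$ lies outside $\langle z\rangle$, and so its fixed-point set in $T$ is exactly $\{v_1\}$ (by the second structural fact above); conjugating, the fixed set of $x$ is $\{gv_1\}$, forcing $g\in A_1$ and hence $w\in A_1\cong\Z^2$, whereupon primitivity of $w^n=x$ in $\Z^2$ gives $|n|=1$. If $p=0$, then $y=z^q$ is central, so $w=z^q$ and $x=z^{qn}$; primitivity again forces $|qn|=1$, whence $|n|=1$.

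The delicate point is the case where $x$ itself happens to be a power of $z$, since primitivity of $x$ alone does not algebraically obstruct large $n$, and indeed the sharp value $n=m$ is attained by $w=r$. What pins the bound down is the interplay of two features: centrality of $z$ in the whole dihedral (so its conjugates collapse, making $gz^kg^{-1}=z^k$ meaningful for the primitivity argument) and the unique-fixed-vertex property for elements of $A_2$ outside the edge group (which forces $r^{jn}\in\langle r^m\rangle$). Together these yield $|n|\le m$ in every case.
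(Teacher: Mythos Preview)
Your proof is correct but takes a different route from the paper. You work in the Bass--Serre tree of the amalgamated product $\langle a,z\rangle *_{\langle z\rangle}\langle r\rangle$ (the splitting appearing in Definition~\ref{defn:J_gamma}), where $\langle a,z\rangle$ is a vertex group and hence elliptic; the paper instead uses the HNN extension $\langle a,r\mid [a,r^m]=1\rangle\cong \Z*_{m\Z}$, in whose Bass--Serre tree $a$ is loxodromic and $\langle a,z\rangle$ is the setwise stabiliser of its axis. The paper then splits on whether $w$ is loxodromic (forcing $w\in\langle a,z\rangle$, so $n=1$) or elliptic (forcing $x=z^{\pm1}$, and then bounding $n$ by the maximal torsion order in the central quotient $\langle a,b\rangle/\langle z\rangle\cong\Z*\Z/m\Z$). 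Your argument trades the axis/quotient dichotomy for a case split on which vertex group $w$ is conjugate into, exploiting bipartiteness of the amalgam tree together with centrality of $z$ to pin down $x$ as $z^{\pm1}$ directly and read off $|jn|=m$. Both approaches are short; yours stays entirely inside the tree and never passes to the central quotient, while the paper's choice of tree makes the loxodromic case slightly slicker at the cost of invoking torsion in a free product at the end.
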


\begin{proof}
    Consider the presentation of the dihedral as the Baumslag-Solitar group $ \langle a,r\mid[a,r^m]=1\rangle\cong \Z*_{m\Z}$, where $r=ab$. In the Bass-Serre tree of this HNN extension, $\langle a,z_{ab}\rangle=\langle a, r^m\rangle$ is the setwise stabiliser of the axis $A$ of $a$. If $w$ acts loxodromically then its axis, which is also the axis of $x=w^n\in\langle a,r^m\rangle$, must be fixed setwise by $a$, as a consequence of Lemma~\ref{lem:commuting_elm_and_minset}, so it must coincide with $A$. In this case $w$ fixes $A$ setwise, so $w\in \langle a,r^m\rangle$ against the assumption that $w$ was a non-trivial root. If instead $w$ is elliptic then so is $x$, and the only elliptic elements in $\langle a, r^m\rangle$ belong to $\langle r^m\rangle$ (this follows from Lemma~\ref{lem:Z^2_no_fix}). Since $x$ is primitive, we must have that $x=r^{\pm m}$, so $w$ has order $n$ in the central quotient $\langle a,b\rangle/\langle r^m\rangle\cong \Z*\Z/m\Z$. It now suffices to notice that torsion elements in the latter have order at most $m$.
\end{proof}

From now on, all defining graphs are finite, connected and on at least three vertices, as in Notation~\ref{notation:3vert}. We now introduce another cyclic splitting of an Artin group, which corresponds to the maximal visual splitting over separating vertices:
\begin{defn}\label{def:crushed_tree}
    Let $\overline{J(\Gamma)}$ be the splitting obtained from $J(\Gamma)$ by collapsing to a point every edge that either has a red vertex as an endpoint, or is a loop. See Figure~\ref{fig:ovJ} for an example of how to obtain $\overline{J(\Gamma)}$ from $J(\Gamma)$.
\end{defn}

\begin{figure}[htp]
\includegraphics[width=\textwidth, alt={The new splitting has one white vertex for every separating vertex and one black vertex for every big chunk. The vertex group of a black vertex is now the full big chunk parabolic, also for even leaves.}]{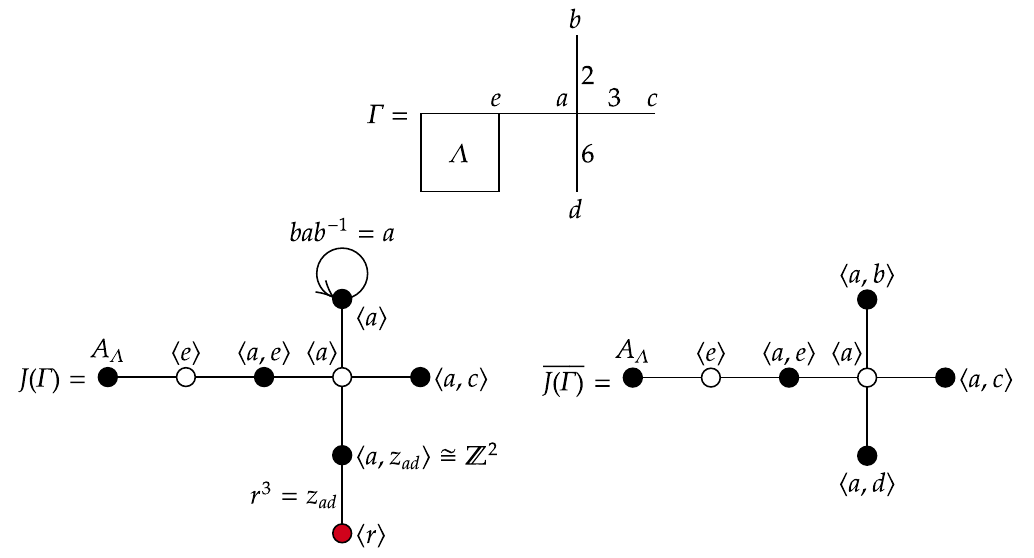}
\caption{$\overline{J(\Gamma)}$ has one white vertex for every separating vertex of $\Gamma$, and one black vertex for every big chunk. Now all vertices of $\Gamma$ act elliptically, including the tips of even leaves.}
\label{fig:ovJ}
\end{figure}

\begin{lemma}\label{lem:nonconj_parabolics}
    Let $H,V$ be two standard big chunk parabolics of $A_\Gamma$. If $H\le kVk^{-1}$ for some $k\in A_\Gamma$, then $H=V$ and $k\in H$.
\end{lemma}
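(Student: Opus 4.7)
The plan is to pass to the Bass–Serre tree $T$ of the splitting $\overline{J(\Gamma)}$ from Definition~\ref{def:crushed_tree}. Unwinding the collapse, the black vertex groups of $\overline{J(\Gamma)}$ are exactly the big chunk parabolics $A_\Lambda$, the white vertex groups are the cyclic groups $\langle v\rangle$ generated by separating vertices, and all edge groups are cyclic. Since every big chunk contains at least one separating vertex and big chunk parabolics strictly contain such cyclic subgroups, $\overline{J(\Gamma)}$ is minimal, so we are in the setting of Notation~\ref{notation:action_on_trees}.

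The key preliminary claim is that each standard big chunk parabolic $A_\Lambda$ fixes a unique vertex of $T$, namely the lift $v_\Lambda$ of the black vertex corresponding to $\Lambda$. Indeed, $A_\Lambda$ is never cyclic: if $\Lambda$ has at least three vertices then by Lemma~\ref{lem:NC_for_all} it does not even split over virtually cyclic subgroups; if $\Lambda$ has two vertices then $A_\Lambda$ is either $\Z^2$ or a dihedral Artin group of label $\ge 3$, both non-cyclic. Since edge stabilisers of $T$ are cyclic, $A_\Lambda$ cannot fix an edge, and since fixed-point sets of subgroups acting on trees are subtrees, $A_\Lambda$ fixes at most one vertex. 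It clearly fixes $v_\Lambda$, so this is the only one.

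With this in hand, the lemma follows in a few lines. Write $H=A_\Lambda$ and $V=A_\Delta$. The hypothesis $H\le kVk^{-1}$ means $H$ fixes $kv_\Delta$, so by uniqueness $v_\Lambda=kv_\Delta$. Now two vertices of $T$ corresponding to distinct black vertices of $\overline{J(\Gamma)}$ lie in distinct $A_\Gamma$-orbits, so $\Lambda=\Delta$, hence $V=H$. Then $v_\Lambda=kv_\Lambda$, so $k$ fixes $v_\Lambda$ and therefore $k\in\Stab{A_\Gamma}{v_\Lambda}=A_\Lambda=H$.

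The main (minor) obstacle is the uniqueness of the fixed vertex for $A_\Lambda$: it is really the non-cyclicity of every big chunk parabolic that makes the argument work, and one needs to handle separately the degenerate cases of two-vertex big chunks (toral and non-toral leaves, and dihedral-type big chunks); once these are checked, the rest is essentially formal Bass–Serre theory applied to $\overline{J(\Gamma)}$.
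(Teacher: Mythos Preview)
Your proof is correct and follows essentially the same approach as the paper: pass to the Bass--Serre tree of $\overline{J(\Gamma)}$, observe that big chunk parabolics are non-cyclic while edge stabilisers are cyclic, hence each such parabolic fixes a unique vertex, and read off the conclusion. The only cosmetic difference is that the paper concludes by noting that $kVk^{-1}$ itself has a unique fixed point and must fix the coset $H$, forcing $kVk^{-1}\le H$ and then equality of cosets, whereas you argue directly via the orbit structure; both are equivalent.
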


\begin{proof} In the Bass-Serre tree of $\overline{J(\Gamma)}$, the non-cyclic subgroup $H$ has a unique fixed point, which is the coset $H$; similarly, $kVk^{-1}$ only fixes $kV$. However $k V k^{-1}$ contains $H$, so $kVk^{-1}\le \Stab{A_{\Gamma}}{H}=H$. Hence $H=k V k^{-1}$, and therefore their fixed cosets $H$ and $kV$ coincide. In turn this means that $k\in V$, so $H=V$.
\end{proof}
\noindent
The exact same proof, with $J(\Gamma)$ replacing $\overline{J(\Gamma)}$, yields the following:

\begin{lemma}\label{lem:nonconj_vertex_groups}
    Let $H,V$ be two non-cyclic vertex groups of $J(\Gamma)$. If $H\le kVk^{-1}$ for some $k\in A_\Gamma$, then $H=V$ and $k\in H$.
\end{lemma}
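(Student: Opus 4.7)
The plan is to adapt the proof of Lemma~\ref{lem:nonconj_parabolics} verbatim, replacing the Bass-Serre tree of $\overline{J(\Gamma)}$ by the Bass-Serre tree $T$ of $J(\Gamma)$. The key observation making this work is that every edge group of $J(\Gamma)$ is infinite cyclic. Indeed, in the construction of Definition~\ref{defn:J_gamma}: edges between a white and a black vertex carry $\langle v\rangle$, the loop attached to a toral leaf carries $\langle a\rangle$, and the edge connecting a black and a red vertex at a braided even leaf carries $\langle z_{ab}\rangle$. Consequently, any non-cyclic subgroup of $A_\Gamma$ cannot fix an edge of $T$, hence fixes at most one vertex.

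Now let $x_H, x_V\in T$ be the vertices of $T$ corresponding (via Bass-Serre theory) to the vertex groups $H$ and $V$ respectively, so that $\Stab{A_\Gamma}{x_H}=H$ and $\Stab{A_\Gamma}{x_V}=V$. Since $H$ is non-cyclic, the first paragraph gives that $x_H$ is the unique vertex of $T$ fixed by $H$. Similarly, $kVk^{-1}$ is non-cyclic and fixes uniquely the vertex $k\cdot x_V$. By hypothesis $H\le kVk^{-1}$, so $H$ fixes $k\cdot x_V$; uniqueness then forces $x_H=k\cdot x_V$.

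Taking stabilisers yields $H=\Stab{A_\Gamma}{x_H}=\Stab{A_\Gamma}{k\cdot x_V}=kVk^{-1}$. Moreover, $x_H$ and $x_V$ now lie in the same $A_\Gamma$-orbit, so they project to the same vertex of the quotient graph of groups $J(\Gamma)$; this forces $H=V$ as vertex groups of the splitting. Finally, $x_H=k\cdot x_V=x_V$ gives $k\in \Stab{A_\Gamma}{x_V}=V=H$, as required.

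No real obstacle is expected: the argument is a structural copy of Lemma~\ref{lem:nonconj_parabolics}. The only mild point to verify is the cyclicity of \emph{all} edge groups of $J(\Gamma)$ (including those internal to the decomposition of an even leaf), which is immediate from Definition~\ref{defn:J_gamma}.
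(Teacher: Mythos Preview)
Your proof is correct and is exactly the approach the paper intends: the paper states that Lemma~\ref{lem:nonconj_vertex_groups} follows by the \emph{exact same proof} as Lemma~\ref{lem:nonconj_parabolics}, with the Bass--Serre tree of $J(\Gamma)$ in place of that of $\overline{J(\Gamma)}$, and you have carried this out in detail, including the verification that all edge groups of $J(\Gamma)$ are cyclic.
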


\noindent We next notice that, since two JSJ decomposition have the same elliptic subgroups, then any isomorphism must preserve non-cyclic vertex groups:

\begin{prop}
\label{prop:phi(chunk)=chunk}
    Let $\phi\colon A_\Gamma\to A_{\Gamma'}$ be an isomorphism. Suppose that $H\le A_\Gamma$ is a non-cyclic black vertex group in $J(\Gamma)$. Then $\phi(H)$ is conjugate to a black vertex group of $J(\Gamma')$.
\end{prop}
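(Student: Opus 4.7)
The plan is to exploit the defining universal property of JSJ decompositions from Definition~\ref{def:jsj_tree}. Pulling back the Bass-Serre tree of $J(\Gamma')$ along $\phi$ gives a JSJ decomposition $\widetilde{T}'$ of $A_\Gamma$ over virtually cyclic subgroups, since universal ellipticity and the domination property are transported by the isomorphism. As $J(\Gamma)$ dominates every universally elliptic $VC$-splitting, each of its vertex stabilisers acts elliptically on $\widetilde{T}'$; in particular, $\phi(H)$ fixes a vertex of the Bass-Serre tree of $J(\Gamma')$, hence is contained in a conjugate $gV'g^{-1}$ of some vertex group $V'$ of $J(\Gamma')$. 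Since $\phi(H)$ is non-cyclic, $V'$ cannot be a white or a red vertex group (both cyclic by construction), so $V'$ must be a non-cyclic black vertex group.

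To upgrade this containment to an equality, I would run the symmetric argument with $\phi^{-1}$ replacing $\phi$ and $V'$ replacing $H$. This produces a non-cyclic black vertex group $V$ of $J(\Gamma)$ such that $\phi^{-1}(V') \le hVh^{-1}$ for some $h\in A_\Gamma$. Chaining the two containments gives $H \le \phi^{-1}(g)hVh^{-1}\phi^{-1}(g)^{-1}$, at which point Lemma~\ref{lem:nonconj_vertex_groups} forces $H = V$ with the conjugating element lying in $H$. Consequently every inclusion in the chain collapses to an equality, and $\phi(H) = gV'g^{-1}$ is the desired conjugate of a black vertex group.

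The main conceptual input is the dominance property of JSJ trees; once that is in place, the rest is combinatorial bookkeeping via Lemma~\ref{lem:nonconj_vertex_groups}. I do not expect substantial obstacles: the only points to verify are that $\widetilde{T}'$ is indeed a JSJ decomposition of $A_\Gamma$ (immediate since $\phi$ is an isomorphism) and that the non-cyclic vertex groups of $J(\Gamma')$ are precisely the black ones, which is visible from the case analysis in Definition~\ref{defn:J_gamma}.
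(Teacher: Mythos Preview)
Your proposal is correct and follows essentially the same route as the paper: show $\phi(H)$ is elliptic in the Bass--Serre tree of $J(\Gamma')$, hence contained in a conjugate of a (necessarily non-cyclic, hence black) vertex group, then run the symmetric argument with $\phi^{-1}$ and collapse the resulting chain of inclusions via Lemma~\ref{lem:nonconj_vertex_groups}. The only difference is that the paper invokes Theorem~\ref{thm:JSJ} directly---vertex groups of $J(\Gamma)$ are elliptic in \emph{every} $VC$-tree, not just universally elliptic ones---so there is no need to verify that the pullback $\widetilde{T}'$ is itself a JSJ decomposition.
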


\begin{proof}
   By Theorem~\ref{thm:JSJ}, $\phi(H)$ must act elliptically on $J(\Gamma')$; thus there exists a vertex group $V$ of $J(\Gamma')$ and some $h\in A_{\Gamma'}$ such that $\phi(H)\le h Vh^{-1}$. Since $H$ is non-cyclic by assumption, $V$ itself must be a non-cyclic black vertex group of $J(\Gamma')$. Repeating this argument with $V$ and the inverse isomorphism $\phi^{-1}$, we get
    \begin{equation}\label{eq:containments_of_chunks}
        H=\phi^{-1}(\phi(H))\le \phi^{-1}(h) \phi^{-1}(V)\phi^{-1}(h)^{-1}\le k \widetilde H k^{-1},
    \end{equation}
    for some $k\in A_\Gamma$ and some non-cyclic black vertex group $\widetilde H$ of $J(\Gamma)$. By Lemma~\ref{lem:nonconj_vertex_groups} all inequalities in Equation~\eqref{eq:containments_of_chunks} are indeed equalities, so we must have had that $\phi(H)=h Vh^{-1}$.
\end{proof}

\noindent For the next Theorem, given a finite labelled graph $\Gamma$, let $\BC(\Gamma)$ be the set of big chunks of $\Gamma$.
\begin{thm}[Big chunk parabolics are isomorphism invariants]\label{thm:iso_invariance}
     Let $\Gamma$ and $\Gamma'$ be finite, connected, labelled simplicial graphs, and let $\phi\colon A_\Gamma\to A_{\Gamma'}$ be an isomorphism. Then there exists a bijection $\phi_\#\colon \BC(\Gamma)\to \BC(\Gamma')$ such that, for every $\Lambda\in \BC(\Gamma)$:
     \begin{enumerate}
        \item $A_{\Lambda}\cong A_{\phi_\#(\Lambda)}$.
        \item \label{item:conjugate} If $\Lambda$ is not a toral leaf, then $A_{\phi_\#(\Lambda)}$ is a conjugate of $\phi(A_{\Lambda})$.
        \item \label{item:evenleaf} If $\Lambda$ is an even leaf, then so is $\phi_\#(\Lambda)$, with the same label.
     \end{enumerate}
     Moreover, if $\phi$ maps standard generators of $\Gamma$ to conjugates of standard generators of $\Gamma'$, then we can arrange that $A_{\phi_\#(\Lambda)}$ is a conjugate of $\phi(A_{\Lambda})$ for every $\Lambda\in \BC(\Gamma)$.
\end{thm}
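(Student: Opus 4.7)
My plan is to construct $\phi_\#$ using the JSJ decomposition $J(\Gamma)$. By Definition~\ref{defn:J_gamma}, the non-cyclic black vertex groups of $J(\Gamma)$ are in bijection with the non-toral-leaf big chunks of $\Gamma$: for a big chunk $\Lambda$ that is not a braided even leaf, the associated group $H_\Lambda$ equals $A_\Lambda$, while for a braided even leaf $\Lambda=\{a,b\}$ with tip $b$, $H_\Lambda=\langle a,z_{ab}\rangle\cong\Z^2$ is strictly smaller than $A_\Lambda$. For each non-toral-leaf big chunk $\Lambda$, Proposition~\ref{prop:phi(chunk)=chunk} yields $\phi(H_\Lambda)=gH_{\Lambda'}g^{-1}$ for a unique non-toral-leaf big chunk $\Lambda'$ of $\Gamma'$, and I set $\phi_\#(\Lambda):=\Lambda'$. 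The symmetric argument with $\phi^{-1}$, combined with Lemma~\ref{lem:nonconj_vertex_groups}, proves that $\phi_\#$ is a bijection between non-toral-leaf big chunks.

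The main obstacle is showing that $\phi_\#$ preserves the \emph{type} (braided even leaf versus not), which promotes the abstract isomorphism $H_\Lambda\cong H_{\phi_\#(\Lambda)}$ coming from Proposition~\ref{prop:phi(chunk)=chunk} to the required $A_\Lambda\cong A_{\phi_\#(\Lambda)}$. The hard case is distinguishing the $\Z^2=A_\Lambda$ coming from a non-leaf edge with label $2$ from the $\Z^2=\langle a',z_{a'b'}\rangle$ sitting inside a braided even leaf with label $2m$: these are abstractly isomorphic, yet the latter contains an element $z_{a'b'}$ admitting an external $m$-th root $a'b'\in A_{\Gamma'}$, whereas I would argue, via Remark~\ref{rem:retraction} together with Lemma~\ref{claim:root_of_dihedral}, that no primitive element of $A_\Lambda=\langle a,b\rangle=\Z^2$ admits roots of arbitrarily large order in $A_\Gamma$. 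Once type preservation is secured, item~(2) for non-braided-even-leaf $\Lambda$ is immediate (as $H_\Lambda=A_\Lambda$), and for braided even leaves a parallel root argument identifies $\phi(ab)$ as an $m$-th root of $\phi(z_{ab})$, matching the labels and yielding items~(1) and~(2) in that case as well. Item~(3) is immediate for braided even leaves.

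Finally, to complete items~(1) and~(3) on toral leaves, I would work with the crushed splitting $\overline{J(\Gamma)}$ from Definition~\ref{def:crushed_tree}, whose non-cyclic vertex groups are exactly the big chunk parabolics. Combining Lemma~\ref{lem:nonconj_parabolics} with the type preservation above, $\phi$ respects the conjugacy classes of non-toral-leaf big chunk parabolics, while the cyclic vertex groups $\langle v\rangle$ (for $v$ separating) are characterised as intersections of adjacent big chunk parabolics and are thus also matched across $\phi$. This yields an isomorphism of the block-cut trees $\mathcal B(\Gamma)\cong\mathcal B(\Gamma')$, which in particular bijects the valence-one black vertices whose parabolic is $\Z^2$---namely the toral leaves. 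Any such bijection extends $\phi_\#$ and automatically satisfies items~(1) and~(3). The moreover clause is handled by a rank argument: under the hypothesis that $\phi$ sends standard generators to conjugates of standard generators, the commuting pair $\phi(a),\phi(b)$ of a toral leaf $\Lambda=\{a,b\}$ lies inside a conjugate of a $\Z^2$ standard parabolic of $A_{\Gamma'}$, which by rank equality must coincide (up to conjugation) with $A_{\phi_\#(\Lambda)}$, delivering the conjugacy required for item~(2) in the toral case.
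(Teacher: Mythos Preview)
Your strategy for the non-toral-leaf big chunks (Proposition~\ref{prop:phi(chunk)=chunk} together with the root arguments via Remark~\ref{rem:retraction} and Lemma~\ref{claim:root_of_dihedral}) is exactly the paper's, but three of your remaining steps have genuine gaps.

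For item~\eqref{item:conjugate} on a braided even leaf, knowing $\phi(\langle a,z_{ab}\rangle)=k\langle c,z_{cd}\rangle k^{-1}$ and $m=n$ does \emph{not} yield $\phi(\langle a,b\rangle)\le k\langle c,d\rangle k^{-1}$: your ``parallel root argument'' only says that $\phi(ab)$ is an $m$-th root in $A_{\Gamma'}$ of an element of $k\langle c,z_{cd}\rangle k^{-1}$, and nothing so far forces that root to lie in $k\langle c,d\rangle k^{-1}$. The paper proves this containment by a non-trivial analysis in the Bass--Serre tree of $J(\Gamma')$, tracing the fixed vertex of the root and invoking malnormality of $\langle\overline c\rangle$ in the central quotient $\langle c,d\rangle/\langle z_{cd}\rangle\cong\Z*\Z/m\Z$. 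For the toral leaves, your block--cut tree reconstruction is circular: you characterise the separating-vertex groups $\langle v\rangle$ as intersections of adjacent big-chunk parabolics, but at that stage only the \emph{non}-toral-leaf parabolics have been matched across $\phi$, and a separating vertex may meet only toral-leaf big chunks (take $\Gamma$ a star with all labels~$2$), so there is nothing to intersect. The paper sidesteps this by observing that the number of toral leaves is the first Betti number of the graph underlying $J(\Gamma)$, an invariant of the JSJ deformation space. Finally, the ``moreover'' step fails as written: two commuting conjugates of standard generators need not lie in a common conjugate of a standard $\Z^2$ parabolic of a general Artin group, and no rank argument links that hypothetical parabolic to your already-chosen $\phi_\#(\Lambda)$. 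The paper argues dynamically instead: the tip $b$ of a toral leaf is loxodromic on $J(\Gamma)$, hence $\phi(b)$ is loxodromic on $J(\Gamma')$; being conjugate to a standard generator then forces $\phi(b)$ to be conjugate to the tip $d$ of some even (necessarily toral, by the earlier type-preservation) leaf $\{c,d\}$, after which one computes $\phi(\langle a,b\rangle)=\phi(\centre{A_\Gamma}{b})=k\,\centre{A_{\Gamma'}}{d}\,k^{-1}=k\langle c,d\rangle k^{-1}$.
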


\begin{proof} By e.g. \cite[Corollary B]{vaskou_isoproblem}, if $\Gamma$ has at most two vertices then $\Gamma$ and $\Gamma'$ are isomorphic as labelled graphs, so the result is trivial. Then assume that both $\Gamma$ and $\Gamma'$ have at least three vertices, so that we are in the setting of Notation~\ref{notation:3vert} and we can use the JSJ decompositions from Definition~\ref{defn:J_gamma}.

We shall progressively construct $\phi_\#$ on increasing subsets of $\BC(\Gamma)$, while checking at every step that conditions  \eqref{item:conjugate} and \eqref{item:evenleaf} are satisfied, and that the map is injective. Then the final map $\phi_\#$ will be injective as well, and satisfy the conditions. As a consequence, $\phi_\#$ will also be bijective, as the same procedure will also produce an injection $(\phi^{-1})_\#$ in the opposite direction.
\par\medskip

\noindent Let $\Lambda\in \BC(\Gamma)$, and let $H$ be the corresponding black vertex group in $J(\Gamma)$. Suppose first that $\Lambda$ is neither an even leaf nor an edge with label $2$, and let $\BC^+(\Gamma)$ be the collection of such big chunks. By construction, $H= A_\Lambda$, which is not isomorphic to either $\Z$ or $\Z^2$. By Proposition~\ref{prop:phi(chunk)=chunk}, $\phi(H)$ is conjugate to a black vertex group of $J(\Gamma')$, which must be of the form $A_{\Lambda'}$ for some $\Lambda'\in \BC^+(\Gamma')$ as $\phi(H)$ is also not isomorphic to either $\Z$ or $\Z^2$. Thus we can define $\phi_\#$ on $\BC^+(\Gamma)$ by setting $\phi_\#(\Lambda)=\Lambda'$. This map is injective because if $\Lambda,\Delta\in \BC^+(\Gamma)$ then $A_\Lambda$ and $A_\Delta$ are not conjugate by Lemma~\ref{lem:nonconj_parabolics}, so they map to non-conjugate vertex groups.
\par\medskip
\noindent Assume next that $\Lambda=\{a,b\}$ is an edge of label $2$ which is not a leaf, so that $H=A_\Lambda\cong \Z^2$. Again, Proposition~\ref{prop:phi(chunk)=chunk} provides some black vertex group $V\cong \Z^2$ of $J(\Gamma')$ such that $\phi(H)$ is conjugate to $V$. Suppose by contradiction that $V=\langle c, z_{cd}\rangle$ for some braided leaf $\{c,d\}$ with label $2m$. Then $z_{cd}=(cd)^m$, and since the isomorphism $\phi$ maps $H$ to $V$ there would be a primitive element $x\in H$ and some element $r\in A_\Gamma$ such that $r^m=x$. Now let $\rho\colon A_\Gamma\to H$ be the retraction from Remark~\ref{rem:retraction}, and notice that the element $\rho(r)\in H$ satisfies $\rho(r)^m=\rho(x)=x$, against the assumption that $x$ was primitive in $H$. 

By exclusion, we must have that $V=A_{\Lambda'}$ for some edge of label $2$ which is not a leaf, so we can extend $\phi_\#$ by setting $\phi_\#(\Lambda)=\Lambda'$. Again by Lemma~\ref{lem:nonconj_parabolics}, two different edges with label $2$ which are not leaves must support non-conjugate parabolics, so the extension of $\phi_\#$ is again injective.
\par\medskip
\noindent Now let $\Lambda=\{a,b\}$ be a braided even leaf with label $2m$ and tip $b$, so that $H=\langle a, z_{ab}\rangle$. By the above argument applied to the inverse of $\phi$, we see that $\phi(H)=kVk^{-1}$, where $k\in A$ and $V=\langle c, z_{cd}\rangle$ for some braided even leaf $\Lambda'=\{c,d\}$ with label $2n$ and tip $d$.

We first show that $m=n$. Suppose by contradiction that this is not the case, and up to replacing $\phi$ by its inverse we can assume that $m<n$. Since $\phi$ conjugates $H$ to $V$, to get a contradiction it is enough to show that a primitive element $x\in \langle a, z_{ab}\rangle$ cannot admit an $n$-th root $y\in A_\Gamma$. Let $\pi\colon A_\Gamma\to \langle a,b\rangle$ be the retraction from Remark~\ref{rem:retraction}. Then $\pi(y)\in \langle a,b\rangle$ is an $n$-th root of $x$, which by Lemma~\ref{claim:root_of_dihedral} means that $n\le m$, a contradiction.

We now claim that $\phi(\langle a,b\rangle)\le k\langle c,d\rangle k^{-1}$. If this is true, then the same argument applied to $\phi^{-1}$ will give that $\phi^{-1}(\langle c,d\rangle)$ is conjugated inside $ \langle a,b\rangle$, so 
$$\langle a,b\rangle=\phi^{-1}(\phi(\langle a,b\rangle))\le \phi^{-1}(k\langle c,d \rangle k)\le h\langle a,b \rangle h^{-1},$$
for some $h\in A$. Lemma~\ref{lem:nonconj_parabolics} will then imply that the above containments are indeed equalities, so that $\phi(\langle a,b\rangle)=k\langle c,d\rangle k^{-1}$. We will therefore set $\phi_\#(\Lambda)=\Lambda'$, and again invoke Lemma~\ref{lem:nonconj_parabolics} to get that the extension of $\phi_\#$ is injective.

To see that $\phi(\langle a,b\rangle)\le k \langle c,d\rangle k^{-1}$, let $\psi$ be the composition of $\phi$ and the conjugation by $k^{-1}$, so that we have to show that $\psi(\langle a,b\rangle)\le  \langle c,d\rangle$. In turn, it is enough to prove that $\psi(r)\le \langle c,d\rangle$ where $r=ab$. Since $r^m=z_{ab}$, the element $x\coloneq \psi(r)^m$ is primitive inside $\langle c,z_{cd}\rangle$; in particular, $\psi(r)$ is elliptic and fixes some coset $gW$, where $g\in A_{\Gamma'}$ and $W$ is a vertex group of $J(\Gamma')$. Let $\mathcal{T}$ be the sub-tree of $J(\Gamma')$ corresponding to the $\langle c,d\rangle$-orbits of the edge between the black vertex $\langle c, z_{cd}\rangle $ and the red vertex $\langle cd\rangle$. If $gW\in \mathcal{T}$ we are done, because then both $g$ and $W$ belong to $ \langle c,d\rangle$ and so $\psi(r)\in gWg^{-1}\le \langle c,d\rangle$. Otherwise, $x$ fixes the geodesic from $gW$ to $\langle c,z_{cd}\rangle$, and on this geodesic let $e$ be the last edge not contained in $\mathcal{T}$. By how $J(\Gamma')$ is constructed, we must have that $e=h\langle c\rangle$, where $h$ lies in some $\langle c,d\rangle$-translate of $\langle c,z_{cd}\rangle$, and in particular in $\langle c,d\rangle$.  Then $x\in h\langle c\rangle h^{-1}\cap \langle c, z_{cd}\rangle$. Since no power of $h c h^{-1}$ is central in $\langle c,d\rangle$, the image $\overline{x}$ of $x$ in the central quotient $\langle c,d\rangle/\langle z_{cd}\rangle\cong \Z*\Z/m\Z$ is non-trivial, and lies in the intersection between $\langle \overline{c}\rangle$ and $\langle \overline{h}\overline{c}\overline{h}^{-1}\rangle$. Since $\langle \overline{c}\rangle$ generates the $\Z$-factor in the free product, which is malnormal, we have that $\overline{h}\in\langle\overline{c}\rangle$, so $h\in \langle c,z_{cd}\rangle$ and therefore $hch^{-1}=c$. In turn, $x\in\langle c\rangle$ is primitive, so $x=c^{\pm 1}$; however, sending every standard generator to $1$ defines a map $A_{\Gamma'}\to \Z$, which maps $\psi(r)$ to an $m$-th root of $\pm 1$, a contradiction.

\par\medskip
\noindent We are left to define $\phi_\#$ on toral leaves of $\Gamma$, whose number corresponds to the rank of the fundamental group of the graph underlying $J(\Gamma)$ (this rank is known as the \emph{Betti number} of $J(\Gamma)$). By a combination of~\cite[Theorem 1.1]{forester} and \cite[Section 4]{GuirardelLevitt}, the two decompositions $J(\Gamma)$ and $J(\Gamma')$ have the same Betti number (in fact, they belong to the same \emph{deformation space}). Thus we can fix any bijection $\psi$ between the set of toral leaves of $\Gamma$ and $\Gamma'$, and for every such leaf $\Lambda$ we set $\phi_\#(\Lambda)=\psi(\Lambda)$. This completes the construction of $\phi_\#$, which is injective as it was injective at every step. 
\par\medskip
In the setting of the ``moreover'' part of the statement we can modify the definition of $\phi_\#$ on toral leaves. Let $\{a,b\}$ be a toral leaf with tip $b$. Since $J(\Gamma)$ and $J(\Gamma')$ have the same elliptic subgroups, $\phi(b)$ must act loxodromically on $J(\Gamma')$, so it must be conjugate to the tip $d$ of some even leaf $\{c,d\}$ because all other vertices of $\Gamma'$ act elliptically. Furthermore, such leaf must be toral, because $\phi$ preserves even braided leaves as we argued above. Hence let $k\in A$ be such that $\phi(b)=kdk^{-1}$. By looking at the action of $A_\Gamma$ on the Bass-Serre tree of the visual splitting $A_\Gamma\cong A_{\Gamma-\{a\}}*_{A_{a}} A_{\{a,b\}}$ one can see that $\operatorname{C}_{A_\Gamma}(b)=\langle a,b\rangle$: if $g\in A_\Gamma$ centralises $b$, then it stabilises the minset of~$b$, which consists of the single vertex $\langle a,b\rangle$. Analogously, $\operatorname{C}_{A_{\Gamma'}}(d)=\langle c,d\rangle$. It then follows that
\[
    \phi(\langle a,b\rangle)=\phi(\centre{A_\Gamma}{b})=\centre{A_{\Gamma'}}{\phi(b)}=k\centre{A_{\Gamma'}}{d}k^{-1}=k\langle c,d\rangle k^{-1};
\]
thus we can define $\phi_\#(\{a,b\})=\{c,d\}$, and again injectivity follows from Lemma~\ref{lem:nonconj_parabolics}.
\end{proof}

\bibliography{biblio.bib}
\bibliographystyle{alpha}
\end{document}